\documentclass[11pt]{article}

\usepackage[T1]{fontenc}
\usepackage{lmodern}
\usepackage[margin=1.08in]{geometry}
\usepackage{amsmath,amssymb,amsthm,mathtools}
\usepackage{microtype}
\usepackage{xcolor}
\usepackage{tikz}
\usetikzlibrary{arrows.meta}
\usepackage{hyperref}

\hypersetup{
  colorlinks=true,
  linkcolor=blue!55!black,
  citecolor=blue!55!black,
  urlcolor=blue!55!black,
  pdftitle={Simultaneous Busemann–Petty and Shephard Volume Comparisons},
  pdfauthor={Artem Zvavitch}
}

\newtheorem{theorem}{Theorem}[section]
\newtheorem{lemma}[theorem]{Lemma}
\newtheorem*{question}{Question}
\theoremstyle{remark}
\newtheorem{remark}[theorem]{Remark}

\newcommand{\R}{\mathbb{R}}
\newcommand{\Sph}{\mathbb{S}}
\newcommand{\dd}{\,\mathrm{d}}
\newcommand{\abs}[1]{\lvert #1\rvert}
\newcommand{\norm}[1]{\lVert #1\rVert}
\newcommand{\pos}[1]{(#1)_{+}}
\providecommand{\subjclass}[2][]{%
  \par\smallskip\noindent
  {\small\textit{#1 Mathematics Subject Classification.} #2\par}}
\providecommand{\keywords}[1]{%
  \par\smallskip\noindent
  {\small\textit{Key words and phrases.} #1\par}}

\title{Simultaneous Busemann--Petty and Shephard Volume Comparisons}
\author{Artem Zvavitch\thanks{The author is supported in part by U.S. National Science Foundation
Grants DMS-2247771 and DMS-2604412.}}
\date{}

\begin{document}
\maketitle

\begin{abstract}
We study the simultaneous Busemann--Petty and Shephard volume comparison
problem: whether comparison of the volumes of all central hyperplane sections and all
orthogonal hyperplane projections determines the ordering of the volumes
of two convex bodies.  For every $n\geq5$, we construct
origin-symmetric convex bodies of revolution $K,L\subset\R^n$ such that
every central hyperplane section and every orthogonal hyperplane projection
of $K$ has strictly smaller  volume than the corresponding
section or projection of $L$, while $|K|>|L|$.  For $n\leq4$, the affirmative
solution of the Busemann--Petty problem shows that the section inequalities
alone imply $|K|\leq|L|$.  Without origin symmetry, we construct such counterexamples in every
dimension $n\geq2$, with one body a nontrivial translate of a Euclidean
ball and the other a noncentrally symmetric body of constant brightness.
\end{abstract}

\subjclass[2020]{Primary 52A20; Secondary 52A38, 52A40}
\keywords{Busemann--Petty problem, Shephard problem, convex
bodies, central hyperplane sections, orthogonal projections, volume
comparison, bodies of revolution, non-origin-symmetric convex bodies,
constant brightness}

%\tableofcontents

\section{Introduction}

We begin with the origin-symmetric setting.  Let $n\geq2$, let
$K,L\subset\R^n$ be origin-symmetric convex bodies, and let
$\xi\in\Sph^{n-1}$.  We denote by $\xi^\perp$ the central hyperplane
orthogonal to $\xi$ and write $K\mid\xi^\perp$ for the orthogonal
projection of $K$ onto this hyperplane.  If a measurable set $A$ is
full-dimensional in its ambient Euclidean space, its volume is denoted by
$|A|$.  When the dimension is not clear from the context, $|A|_k$ denotes
its $k$-dimensional volume.

The \emph{Busemann--Petty problem}, posed in 1956
\cite{BusemannPetty}, asks whether
\begin{equation}\label{eq:BP-intro}
 \bigl|K\cap\xi^\perp\bigr|_{n-1}
 \leq \bigl|L\cap\xi^\perp\bigr|_{n-1}
 \qquad\text{for all }\xi\in\Sph^{n-1}
\end{equation}
implies $|K|\leq|L|$.  The answer is affirmative for $n\leq4$ and
negative for $n\geq5$.  The solution emerged through a sequence of results
including
\cite{LarmanRogers,Ball,Bourgain,Giannopoulos1990,Papadimitrakis,
Lutwak,Gardner1994,GardnerKoldobskySchlumprecht,Zhang1999}.

The projection analogue is \emph{Shephard's problem}, posed in 1964
\cite{Shephard}: does
\begin{equation}\label{eq:Shephard-intro}
 \bigl|K\mid\xi^\perp\bigr|_{n-1}
 \leq \bigl|L\mid\xi^\perp\bigr|_{n-1}
 \qquad\text{for all }\xi\in\Sph^{n-1}
\end{equation}
imply $|K|\leq|L|$?  The answer is affirmative in dimension two, but
negative in every dimension $n\geq3$ by results of Petty and Schneider
\cite{Petty,Schneider1967}.  The Fourier-analytic relationship
between the two problems is discussed in
\cite{KoldobskyRyaboginZvavitch,KoldobskyBook,KoldobskyYaskinBook};
see also \cite{SchneiderBook,GardnerBook} for general background.

It is natural to ask whether imposing the two comparison hypotheses
simultaneously restores volume comparison.

\begin{question}
Suppose that
\begin{equation}\label{eq:problem}
 \left.
 \begin{aligned}
  \bigl|K\cap\xi^\perp\bigr|_{n-1}
   &\leq \bigl|L\cap\xi^\perp\bigr|_{n-1},\\
  \bigl|K\mid\xi^\perp\bigr|_{n-1}
   &\leq \bigl|L\mid\xi^\perp\bigr|_{n-1}
 \end{aligned}
 \right\}
 \qquad\text{for all }\xi\in\Sph^{n-1}.
\end{equation}
Must \eqref{eq:problem} imply $|K|\leq|L|$?
\end{question}

For $n\leq4$, the section inequalities alone give an affirmative answer.
There are also two important affirmative special cases in every dimension.
Lutwak's intersection-body formulation of the Busemann--Petty problem
shows that \eqref{eq:BP-intro} is sufficient when $K$ is an intersection
body \cite{Lutwak,Gardner1994}, while Schneider's projection-body
formulation of Shephard's problem shows that \eqref{eq:Shephard-intro} is
sufficient when $L$ is a projection body \cite{Schneider1967,Petty}.
 Consequently, an origin-symmetric
counterexample to the simultaneous problem must have $K$ outside the class
of intersection bodies and $L$ outside the class of projection bodies.  In
particular, neither comparison body can be a Euclidean ball, since the ball
belongs to both classes.  Thus the classical counterexamples in which one
of the two bodies is a ball cannot be used directly.

Even without the projection inequalities, the section comparison controls
volume up to an absolute constant.  This is the isomorphic
Busemann--Petty problem, which asks whether
\begin{equation}\label{eq:isomorphic-intro}
 |K|\leq C|L|
\end{equation}
holds with a constant $C$ independent of the dimension.  It is equivalent
to Bourgain's slicing problem; see \cite{MP89}.  Klartag and Lehec resolved
the slicing problem affirmatively \cite{KlartagLehec}; alternative proofs
were subsequently given by Bizeul \cite{Bizeul} and Brazitikos
\cite{BrazitikosSlicing}.  The question considered here is whether adding
all the projection inequalities improves the constant in
\eqref{eq:isomorphic-intro} to exactly one.

A related theorem of Giannopoulos and Koldobsky
\cite{GiannopoulosKoldobsky}, answering a question of V.~Milman, gives the
cross-comparison
\begin{equation}\label{eq:Milman-intro}
 \bigl|K\mid\xi^\perp\bigr|_{n-1}
 \leq
 \bigl|L\cap\xi^\perp\bigr|_{n-1}
 \qquad\text{for all }\xi\in\Sph^{n-1},
\end{equation}
which implies $|K|\leq|L|$, without any symmetry assumption. This makes
the negative result in Theorem \ref{thm:simultaneous} below particularly unexpected.

Hosle studied the reverse mixed comparison, in which the central sections
of $K$ are bounded by the corresponding projections of $L$.  After first
obtaining a $\sqrt n$ estimate for origin-symmetric bodies in John's
position, he recently proved that, for centered $K$, this comparison
implies
$
 |K|\leq c\sqrt n\,|L|,
$
and that the order $\sqrt n$ is sharp
\cite{Hosle,HosleIsomorphic}. 

Our first main theorem gives a negative answer to the simultaneous problem
in every remaining dimension.  Here and below, a \emph{body of revolution}
is a body invariant under every orthogonal transformation that fixes a
distinguished axis pointwise.

\begin{theorem}\label{thm:simultaneous}
For every integer $n\geq5$, there are origin-symmetric convex bodies of
revolution $K,L\subset\R^n$ such that
\[
 \bigl|K\cap\xi^\perp\bigr|_{n-1}
 <\bigl|L\cap\xi^\perp\bigr|_{n-1}
 \quad\text{and}\quad
 \bigl|K\mid\xi^\perp\bigr|_{n-1}
 <\bigl|L\mid\xi^\perp\bigr|_{n-1}
 \qquad\text{for all }\xi\in\Sph^{n-1},
\]
but $|K|>|L|$.
\end{theorem}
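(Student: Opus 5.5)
We build $K$ and $L$ as small perturbations of a single body, so that both families of comparisons become linear conditions on the perturbation. The natural base is the cylinder or a smooth body of revolution close to the Euclidean ball. Concretely, start from a smooth origin-symmetric convex body of revolution $M$ with positive curvature and write
\[
K=M_{\varepsilon f},\qquad L=M_{\varepsilon g}\cdot(1+\varepsilon\delta),
\]
where $M_{\varepsilon f}$ perturbs the radial function of $M$ by $\varepsilon f$, and $f,g$ are even functions on $\Sph^{n-1}$ invariant under rotations about the axis. To first order in $\varepsilon$:
\begin{itemize}
\item the section function involves the spherical Radon transform $R(\rho_M^{n-2}h)$ of the radial perturbation $h$;
\item the projection function involves the cosine transform $C(\cdot)$ of the perturbation of the surface area measure;
\item the volume involves $\int\rho_M^{n-1}h$.
\end{itemize}
Because $M$ has positive curvature, support and radial perturbations correspond, so everything can be written in terms of one even perturbation $\phi=f-g$.

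The counterexample then amounts to finding $\phi$ such that $R\phi<0$ and $C\phi<0$ pointwise (after weighting), while the volume functional is positive. The weights that appear come from the base body. If $M$ is the ball, both transforms become multipliers on spherical harmonics and the volume functional is $\int\phi$. But $R\phi<0$ and $\int\phi>0$ contradict each other, since $\int R\phi=c\int\phi$. This is exactly why the ball must be avoided, as the introduction notes.

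So I would take $M$ to be a non-intersection body of revolution in $\R^n$, $n\ge5$; a suitably flattened body near the cylinder, as in the classical Gardner/Zhang constructions, works for $n\ge5$. Its weighted Radon transform then has a "negative direction": there is an even axially symmetric measure $\mu\ge0$ with
\[
\int (\rho_M^{n-1}h)\,d\sigma>0\quad\text{while}\quad R(\rho_M^{n-2}h)<0,
\]
by the Hahn–Banach duality behind Lutwak's theorem. Equivalently, $\rho_M^{n-1}$ is not in the closed cone generated by the Radon images.

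The main obstacle is making the projection inequalities hold at the same time. My plan is to exploit the different smoothing orders: $R$ gains $(n-2)/2$ derivatives, while $C$ gains $(n+1)/2$ derivatives in Sobolev scale. We add a highly oscillating but tiny axially symmetric component $\psi$ to the perturbation, chosen as a zonal harmonic of large degree $k$ with coefficients picked so that $C\psi$ dominates and is negative-definite near zero modulo constants. Alternatively, we shrink $L$'s support slightly and compensate at low frequencies.

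Cleaner still, I would make the projection inequality follow from a Minkowski-type inequality. Choose $L$ so that its surface area measure dominates $K$'s in the first-order sense. Since the projection function is $\tfrac12\int|\langle x,\xi\rangle|\,dS$, it suffices to have $S_L-S_K\ge0$ as a measure, up to $O(\varepsilon^2)$ with a strict margin.

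I would therefore search for $\phi$ satisfying three conditions:
\begin{itemize}
\item the first variation of $S$ (a second-order elliptic operator in $\phi$ on the sphere, for support perturbations) is negative;
\item $R$ of the corresponding radial variation is negative;
\item the first variation of volume $\int\phi\,dS_M$ is positive.
\end{itemize}
This reduces to a one-variable ODE/sign problem in the axial angle, since everything is zonal. I would try concrete choices, for example $M$ a smoothed cylinder of revolution with $\phi$ concentrated near the equator, which increases volume cheaply while decreasing equatorial-direction sections. The $n\ge5$ threshold enters through the failure of $\rho_M$ to give an intersection body.

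Finally, we fix strictness by the scaling $\delta$, and by compactness of the sphere to absorb $O(\varepsilon^2)$ errors into the strict first-order margins. Verifying the sign condition for the surface-area variation simultaneously with the Radon condition is the step I expect to be hardest. If it fails, I would fall back to a two-scale construction combining a non-intersection-body perturbation with a large-degree cosine-transform correction.
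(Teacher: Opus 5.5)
\textbf{There is a genuine gap.} The linearization framework is sound, and so is your observation that the ball (indeed any intersection body) cannot serve as the base body. But the step you defer as ``hardest'' is the entire content of the theorem, and none of the routes you offer for it can work.

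\textbf{The ``cleaner'' reduction is self-defeating for every base body.} Let $M$ be $C^2_+$ with $h_M>0$, let $\phi$ be a support perturbation, let $\delta f$ be the resulting first variation of the curvature function, and put $\delta V=\int\phi f_M\dd u$. The first variation of $\abs{M}=\frac1n\int h_Mf_M\dd u$ equals both $\delta V$ and $\frac1n\int(\phi f_M+h_M\,\delta f)\dd u$, whence
\[
 \int_{\Sph^{n-1}}\phi\, f_M\dd u=\frac1{n-1}\int_{\Sph^{n-1}}h_M\,\delta f\dd u .
\]
So surface-area domination $\delta f\le(n-1)\lambda f_M$, with or without your homothety, forces $\delta V\le n\lambda\abs{M}$. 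In particular your three listed conditions are mutually inconsistent. This is just the infinitesimal form of Minkowski's first inequality ($S_K\le S_L\Rightarrow\abs K\le\abs L$).

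\textbf{The large-degree zonal-harmonic correction fails as well.} Such a term has mean zero and, near the ball, is essentially an eigenfunction of both linearized maps. It therefore contributes sign-changing terms to the section and projection data and cannot enforce a pointwise sign.

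\textbf{Solvability is a property of the base body.} By the same duality you invoke for sections, suppose $h_M=\frac12\int\abs{\langle\cdot,v\rangle}\dd\nu(v)$ with $\nu\ge0$. The identity above gives $(n-1)\delta V=\int\delta P\dd\nu$, so $\delta P\le(n-1)\lambda P_M$ again forces $\delta V\le n\lambda\abs{M}$. Hence $M$ must avoid projection bodies as well as intersection bodies, and no auxiliary component added to $\phi$ can rescue an unsuitable $M$. You must exhibit a specific $M$ and a specific perturbation, and verify both families of inequalities uniformly in direction. Your proposal does neither.

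\textbf{What the paper supplies.} The base body is the near-cylinder $L_\eta=M_{r_\eta}$, with $r_\eta(x)=1-\frac{2\eta}{3}x^{3/2}$ away from a cubic cap. The shape variation is a smoothing of $\mu_0=-\delta_a+(1+m)\delta_b$, with $b=\sqrt3/2$, $a=b/3$, $0<m<7/15$. The explicit kernels \eqref{eq:S-kernel} and \eqref{eq:Q-kernel} show that this variation increases every central section and every projection at relative rate at most $(n-2)m+o(1)$, uniformly in $t$. Meanwhile the volume increases at relative rate $(n-1)m+o(1)$. A homothetic contraction at rate $\lambda=\frac{(2n-3)m}{2n-1}$ then makes both data derivatives negative and the volume derivative positive.

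The crux is the projection estimate for $t$ of order $\eta^{-1}$. There the curvature term $-r_\eta r_\eta''t^2$ is of order $\eta^{-1}$, so the cylinder kernel is not a uniform limit. The choice $-r_\eta'=\eta\sqrt x$ orders both the cutoffs and the curvature coefficients at $a$ and $b$ correctly. The condition $\sqrt{b/a}>1+m$ then makes the signed contribution nonpositive (Lemma~\ref{lem:projection-boundary-layer}). Nothing in your outline plays the role of this lemma.
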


Our starting point is the cylinder, whose use in the Busemann--Petty
problem goes back to Giannopoulos \cite{Giannopoulos1990}; see also
\cite{Gardner1994,Papadimitrakis,GrinbergRivin} and the later perturbative
constructions with bodies of revolution in
\cite{GardnerRyaboginYaskinZvavitch,Ryabogin,
NazarovRyaboginZvavitchMaximal,
NazarovRyaboginZvavitchNonuniqueness}.  The cylinder is itself a
projection body, however, and therefore cannot be retained as the
comparison body $L$ with the larger projection data: the projection-body
case of Shephard's theorem would then force $|K|\leq |L|$.  Thus the
cylinder serves only as a limiting model.  We first replace it by a
suitable near-cylinder body $L$, and then construct $K$ through a further
perturbation.  The new point is to find a single two-point formal variation
that controls the section and projection kernels simultaneously.  After
uniformly smoothing this variation and combining it with a small
homothetic contraction, both the section and projection data decrease
while the volume increases.

Origin symmetry is essential for the affirmative conclusion in low
dimensions.  Without symmetry, even uniqueness from the combined section
and projection data was already known to fail.  The antipodal rearrangement
underlying such examples goes back to Gardner and Vol\v{c}i\v{c}
\cite{GardnerVolcic} for sections and to Goodey, Schneider, and Weil
\cite{GoodeySchneiderWeil} for projections.  Ryabogin and Yaskin combined
these ideas to construct, in every dimension $n\geq2$, noncongruent convex
bodies having identical central-section and projection functions
\cite[Lemma~2.1, Proposition~2.2, and
Corollary~2.3]{RyaboginYaskin}.

The construction in \cite{RyaboginYaskin} preserves the unordered
antipodal pairs of radial values and therefore preserves volume.  The
related examples of Nazarov, Ryabogin, and the author in even dimensions
$n\geq4$ also preserve volume
\cite[Theorem~2]{NazarovRyaboginZvavitchNonuniqueness}.  Consequently,
these equal-data, equal-volume examples cannot be directly converted by scaling into
the reversed-volume comparison required here.

In fact, once origin symmetry is omitted, the simultaneous comparison
problem fails in every nontrivial dimension.

\begin{theorem}\label{thm:nonsymmetric}
For every integer $n\geq2$, there are convex bodies of revolution
$K,L\subset\R^n$, both containing the origin in their interiors and neither
origin-symmetric, such that
\[
 \bigl|K\cap\xi^\perp\bigr|_{n-1}
 <\bigl|L\cap\xi^\perp\bigr|_{n-1}
 \quad\text{and}\quad
 \bigl|K\mid\xi^\perp\bigr|_{n-1}
 <\bigl|L\mid\xi^\perp\bigr|_{n-1}
 \qquad\text{for all }\xi\in\Sph^{n-1},
\]
but $|K|>|L|$.  Moreover, $K$ may be chosen to be a nontrivial translate
of a Euclidean ball, while $L$ is not centrally symmetric about any point.
\end{theorem}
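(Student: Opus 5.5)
The plan is to take $L$ to be a small, non-symmetric, constant-brightness perturbation of a ball $B_R$, and $K$ a slightly smaller ball $\rho B_2^n$ translated along the axis by $te$. The projection inequalities then hold trivially. The volume inequality comes from the Blaschke-body argument. For the section inequalities, the first-order effect of an odd perturbation vanishes, and translating $K$ absorbs the remaining second-order errors away from the equatorial directions.

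\emph{Construction of $L$.} Fix an axis $e\in\Sph^{n-1}$ and $R>0$. Let $g$ be a smooth odd zonal function on $\Sph^{n-1}$ (invariant under rotations fixing $e$). Since $g$ is odd and zonal, $\int u\,g(u)\dd u$ points along $e$, and we additionally choose $g$ orthogonal to the linear function $u\mapsto u\cdot e$, so that $\int u\,g(u)\dd u=0$. By Minkowski's existence theorem, for small $\delta>0$ there is a convex body $L_\delta$ of revolution with $S_{L_\delta}=S_{B_R}+\delta g\,\dd u$. By Cauchy's formula the projection function depends only on the even part of the surface area measure. Hence $|L_\delta\mid\xi^\perp|=\kappa_{n-1}R^{n-1}$ for all $\xi$, where $\kappa_{n-1}=|B_2^{n-1}|$; that is, $L_\delta$ has constant brightness. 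Its Blaschke body $\nabla L_\delta$ (Blaschke sum of $\tfrac12L_\delta$ and $\tfrac12(-L_\delta)$) is centrally symmetric with the same brightness, so by Aleksandrov's uniqueness theorem $\nabla L_\delta=B_R$. By the Kneser–Süss/Minkowski inequality, $|L_\delta|<|B_R|$ strictly, since $L_\delta$ is not centrally symmetric when $g\neq0$. More quantitatively, I expect $|B_R|-|L_\delta|\geq c\,\delta^2$ with $c>0$, from a second-order expansion of the Minkowski mixed-volume inequality. We translate $L_\delta$ so that the origin is at the point of the axis closest to the center of the unperturbed ball.

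\emph{Sections of $L_\delta$.} The radial function satisfies $\rho_{L_\delta}=R+\delta h+O(\delta^2)$, where $h$ is odd because the first-order variation inherits the parity of $g$. The central section function is
\[
\tfrac1{n-1}\int_{\Sph^{n-1}\cap\xi^\perp}\rho_{L_\delta}^{n-1}.
\]
Its first variation is the spherical Radon transform of an odd function, which vanishes identically. Hence
\[
|L_\delta\cap\xi^\perp|\geq\kappa_{n-1}R^{n-1}-C\delta^2
\]
uniformly in $\xi$.

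\emph{Choice of $K$.} Let $K=\rho B_2^n+te$ with $\rho^n=R^n-c'\delta^2$ and $0<c'<c\,/\kappa_n$ (so $|K|>|L_\delta|$), and $\rho<R$. Then $|K\mid\xi^\perp|=\kappa_{n-1}\rho^{n-1}<\kappa_{n-1}R^{n-1}$, which gives the projections. For the sections,
\[
|K\cap\xi^\perp|=\kappa_{n-1}\bigl(\rho^2-t^2(\xi\cdot e)^2\bigr)^{(n-1)/2}.
\]
Taking $t$ of order $\delta$ with a large constant, this is below $\kappa_{n-1}R^{n-1}-C\delta^2$ whenever $|\xi\cdot e|$ is not tiny.

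\emph{Main obstacle.} The hard part is the near-equatorial directions $\xi\perp e$, where the translation does not help and $|K\cap\xi^\perp|=\kappa_{n-1}\rho^{n-1}$. There we need the second-order section loss of $L_\delta$ at the equator to be strictly smaller than the second-order volume loss, after normalization:
\[
\kappa_{n-1}R^{n-1}-|L_\delta\cap e^{\perp\perp}|<\kappa_{n-1}(R^{n-1}-\rho^{n-1})\approx\tfrac{n-1}{n}\kappa_{n-1}R^{-1}c'\delta^2 .
\]
Here $e^{\perp\perp}$ denotes a hyperplane containing $e$. I would try first to choose $g$ so that the equatorial sections, which contain the axis, actually gain at second order, or at least lose less. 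Concretely, I would concentrate the odd mass of $g$ near the poles $\pm e$. Then in the hyperplanes containing $e$ the perturbation is a one-dimensional odd shift along the axis, whose second-order effect I expect to be computable and controllable by re-choosing the origin on the axis. I would verify this with an explicit second-order computation for a family of $g$ and pick constants accordingly. For $n=2$ the same scheme can alternatively be checked directly with a Reuleaux-type body of constant width symmetric about an axis.
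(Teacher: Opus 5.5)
Your overall architecture matches the paper's: a nonsymmetric constant-brightness body near a ball, compared with a slightly shrunk, axially translated ball. However, the decisive case is left open, so as written there is a genuine gap.

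Your bound $|L_\delta\cap\xi^\perp|\ge\kappa_{n-1}R^{n-1}-C\delta^2$ is uniform in $\xi$. Translating $K$, by contrast, lowers its sections only by an amount of order $t^2(\xi\cdot e)^2$. So for small $|\xi\cdot e|$ you must beat an uncontrolled constant $C$ using only $R^{n-1}-\rho^{n-1}$. Even that margin rests on the conjectured, unproved bound $|B_R|-|L_\delta|\ge c\delta^2$. The remedy you sketch (concentrating $g$ near the poles, re-choosing the origin, a second-order computation not yet done) is not an argument. Part of it also cannot work: translating $L_\delta$ along $e$ moves each section by a hyperplane containing $e$ within that hyperplane, so it cannot change the equatorial section volumes.

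The missing observation is that there is no equatorial loss at all, for any admissible $g$. If $\xi\perp e$, each slice of a body of revolution perpendicular to $e$ is a ball centered on the axis. The intersection of that ball with $\xi^\perp$ coincides with its orthogonal projection onto $\xi^\perp$. Integrating over slices gives $|L_\delta\cap\xi^\perp|_{n-1}=|L_\delta\mid\xi^\perp|_{n-1}=\kappa_{n-1}R^{n-1}$ exactly.

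Now write $A(\delta,\xi_1)$ for the normalized section function, with $\xi_1=\xi\cdot e$. It has the following properties:
\begin{itemize}
\item it equals $1$ on both lines $\delta=0$ and $\xi_1=0$;
\item it is even in $\xi_1$, since $\xi^\perp=(-\xi)^\perp$;
\item it satisfies $\partial_\delta A(0,\cdot)=0$ by your Radon-transform observation (the paper instead uses evenness in $\delta$, coming from $L_{-\varepsilon}=-L_\varepsilon$).
\end{itemize}
Given joint smoothness in $(\delta,\xi_1)$, a two-variable Taylor expansion near $\xi_1=0$, together with your uniform $O(\delta^2)$ bound elsewhere, yields $|A(\delta,\xi_1)-1|\le M\delta^2\xi_1^2$. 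This is Lemma~\ref{lem:nonsym-section-factorization}.

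The paper secures the smoothness by solving the rotational curvature equation explicitly for the curvature function $1+6\varepsilon u_1-2(n+2)\varepsilon u_1^3$. Minkowski's existence theorem alone does not give $\delta$-regularity, so you would need an implicit-function argument at the ball.

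With the factorization in hand, normalize $R=1$. Take $t=c\delta$ with $\frac{n-1}{4}c^2>M$, and any $\rho$ strictly between $(|L_\delta|/\kappa_n)^{1/n}$ and $1$. For $\xi_1\neq0$ and small $\delta$,
\[
(\rho^2-c^2\delta^2\xi_1^2)^{(n-1)/2}\le1-\tfrac{n-1}{4}c^2\delta^2\xi_1^2<1-M\delta^2\xi_1^2 .
\]
For $\xi_1=0$, the inequality follows from $\rho<1$. Thus no quantitative volume gap is needed. The strict inequality $|L_\delta|<|B_R|$, from your Kneser--S\"uss argument or from the isoperimetric inequality as in the paper, suffices.
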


The second construction uses a mechanism different from that used in the proof of  Theorem \ref{thm:simultaneous}.  We prescribe an odd
degree-three spherical-harmonic perturbation of the curvature function of
the Euclidean ball and solve the resulting rotational equation explicitly.
This gives a family $L_\varepsilon$ with the same projection function as the
ball, strictly smaller volume, and a uniform second-order estimate for its
central sections.  Comparing $L_\varepsilon$ with a suitably translated and
slightly contracted ball then yields the required strict inequalities and
the reversed volume comparison.

Section~\ref{sec:formulas} derives the formulas and first variations for
bodies of revolution.  Section~\ref{sec:cylinder} proves
Theorem~\ref{thm:simultaneous}, and Section~\ref{sec:nonsymmetric} proves
Theorem~\ref{thm:nonsymmetric}.
\section{Bodies of revolution and first variation formulas}
\label{sec:formulas}

We write $\norm{y}_2$ for the Euclidean norm in $\R^k$ and
$B_2^k=\{y\in\R^k:\norm{y}_2\leq1\}$ for its unit ball.

Let $r:[0,1]\to[0,\infty)$ be a continuous, decreasing and concave function, with $r(0)>0$ and
$r(1)=0$, and define
\begin{equation}\label{eq:Mr}
 M_r=\{(x,z)\in\R\times\R^{n-1}:\abs x\leq1,
                  \ \norm z_2\leq r(\abs x)\}.
\end{equation}
Since $r$ is decreasing and concave, the function
$x\mapsto r(\abs{x})$ is concave on $[-1,1]$ and  $M_r$ is an origin-symmetric
convex body, completely determined by the profile $r$.

Because \(M_r\) is invariant under every orthogonal transformation fixing
the \(x\)-axis pointwise, its central sections and orthogonal projections
depend only on the angle between their normal direction and the \(x\)-axis.
We may therefore parametrize the unit normal by
\begin{equation}\label{eq:tdef}
 \xi_t=\frac{(t,1,0,\ldots,0)}{\sqrt{1+t^2}},
 \qquad 0\leq t<\infty.
\end{equation}
We also use the limiting notation
\(\xi_\infty=(1,0,\ldots,0)\).  Thus \(t=0\) corresponds to a normal
direction perpendicular to the axis of revolution, whereas \(t=\infty\)
corresponds to the axial normal; in the latter direction both the central
section and the orthogonal projection are $r(0)B_2^{n-1}$.

For \(0\leq t\leq\infty\), set
\[
 A_{n,r}(t)
 =\bigl|M_r\cap\xi_t^\perp\bigr|_{n-1},
 \qquad
 P_{n,r}(t)
 =\bigl|M_r\mid\xi_t^\perp\bigr|_{n-1},
\]
where the values at \(t=\infty\) are understood as limits.

\begin{figure}[htbp]
\centering
\begin{tikzpicture}[>=Latex,font=\small]
  \begin{scope}[xshift=3.0cm,x=2.05cm,y=2.05cm]
    \path[fill=blue!7,draw=blue!55!black,very thick]
      plot[domain=-1:1,samples=80] (\x,{0.9*(1-\x*\x)})
      -- plot[domain=1:-1,samples=80] (\x,{-0.9*(1-\x*\x)}) -- cycle;
    \draw[->] (-1.18,0)--(1.25,0) node[right] {$x$};
    \draw[->] (0,-1.08)--(0,1.18) node[above] {$z_1$};
    \draw[densely dashed,thick,black!65]
      (-1.17,{0.55*1.17})--(1.17,{-0.55*1.17})
      node[right] {$\xi_t^\perp$};
    \draw[->,very thick,purple!70!black] (0,0)--(0.45,0.82)
      node[above right=-1pt] {$\xi_t$};
    \node[blue!55!black,fill=white,fill opacity=.82,text opacity=1,
          inner sep=1pt] at (-0.60,0.76) {$z_1=r(|x|)$};
    \node[align=center] at (0,-1.30)
      {\textup{(a) Meridian and direction}};
  \end{scope}

  \begin{scope}[xshift=10.0cm,x=1.65cm,y=1.65cm]
    \path[fill=blue!7,draw=blue!55!black,very thick] (0,0) circle (1);
    \draw[->] (-1.25,0)--(1.28,0) node[right] {$z_1$};
    \draw[->] (0,-1.18)--(0,1.23) node[above] {$z_2$};
    \draw[very thick,purple!70!black] (-0.40,-0.916)--(-0.40,0.916);
    \fill[purple!70!black] (-0.40,-0.916) circle (1.1pt)
                             (-0.40,0.916) circle (1.1pt);
    \draw[<->,black!70] (0,-0.15)--(-0.40,-0.15)
      node[midway,below=3pt,fill=white,inner sep=.5pt,font=\scriptsize]
      {$|tx|$};
    \draw[->,black!75] (0,0)--(0.70,-0.70)
      node[midway,above right=-1pt] {$r(x)$};
    \draw[densely dotted,black!45] (-0.40,0)--(-0.23,0);
    \draw[densely dotted,black!45] (-0.40,0.916)--(-0.23,0.916);
    \draw[<->,black!75] (-0.27,0)--(-0.27,0.916)
      node[midway,right=2pt,fill=white,inner sep=1pt,font=\scriptsize]
      {$\sqrt{r(x)^2-t^2x^2}$};
    \node[purple!70!black] at (-0.40,-1.10)
      {$z_1=-tx$};
    \node[align=center] at (0,-1.55)
      {\textup{(b) Transverse slice for fixed $x\geq0$}};
  \end{scope}
\end{tikzpicture}
\caption{In \textup{(a)}, the
shaded meridian generates $M_r$ by rotation about the $x$-axis; the dashed
line is the trace of $\xi_t^\perp$ in the meridian plane, with unit normal
$\xi_t$.  In \textup{(b)}, for fixed $x\geq0$, the equation $z_1=-tx$ cuts
the transverse $(n-1)$-ball of radius $r(x)$ in an $(n-2)$-ball of the
indicated radius.}
\label{fig:meridian-geometry}
\end{figure}
By Fubini's theorem, integration over the slices at axial coordinate $x$
gives the volume formula
\begin{equation}\label{eq:volume-general}
 |M_r|=2|B_2^{n-1}|\int_0^1 r(x)^{n-1}\dd x.
\end{equation}
The following lemma gives the volume of a central hyperplane section. Here and below, $y_+=\max\{y,0\}$.

\begin{lemma}
\label{lem:volume-section-formulas}
Assume $n\geq3$.  For every finite $t\geq0$, the volume of the
corresponding central section of $M_r$ is
\begin{equation}\label{eq:section-general}
 A_{n,r}(t)
 =2|B_2^{n-2}|\sqrt{1+t^2}
 \int_0^1
 \pos{r(x)^2-t^2x^2}^{(n-2)/2}\dd x.
\end{equation}
\end{lemma}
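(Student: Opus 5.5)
The plan is to slice the central section $M_r\cap\xi_t^\perp$ by hyperplanes orthogonal to the $x$-axis and then apply Fubini's theorem inside the hyperplane $\xi_t^\perp$. The only subtlety is the Jacobian relating the coordinate $x$ to arc length along the trace of $\xi_t^\perp$ in the meridian plane.

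\textbf{Parametrizing the hyperplane.} A point $(x,z)\in\R\times\R^{n-1}$ lies in $\xi_t^\perp$ exactly when $tx+z_1=0$, that is, $z_1=-tx$. Write $z=(z_1,w)$ with $w\in\R^{n-2}$. The map
\[
 (x,w)\mapsto (x,-tx,w)
\]
is an affine parametrization of $\xi_t^\perp$ by $\R\times\R^{n-2}$. Its Jacobian factor is $\sqrt{1+t^2}$: the $x$-direction is sent to the vector $(1,-t,0,\ldots,0)$ of length $\sqrt{1+t^2}$, and this vector is orthogonal to the $w$-directions, which are sent isometrically. Hence
\[
 |M_r\cap\xi_t^\perp|_{n-1}=\sqrt{1+t^2}\int_{-1}^{1}\bigl|\{w\in\R^{n-2}: t^2x^2+\norm w_2^2\le r(\abs x)^2\}\bigr|_{n-2}\dd x .
\]

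\textbf{Computing the fibres.} For fixed $x$, the fibre is the $(n-2)$-ball of radius $\sqrt{r(\abs x)^2-t^2x^2}$ when that quantity is nonnegative, and empty otherwise; this is exactly Figure~\ref{fig:meridian-geometry}(b). Its volume is therefore $|B_2^{n-2}|\,\pos{r(\abs x)^2-t^2x^2}^{(n-2)/2}$. The integrand is even in $x$, so the integral over $[-1,1]$ equals twice the integral over $[0,1]$, which gives \eqref{eq:section-general}.

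Two boundary points need care. The case $n=3$ is the reason for the assumption $n\ge3$: then the fibre is a segment and $|B_2^1|=2$, consistent with the formula. Points with $\abs x>1$ lie outside $M_r$, so restricting the integral to $x\in[-1,1]$ is legitimate.

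The only step I expect to need care is justifying the factor $\sqrt{1+t^2}$. I would present it as the area (coarea) factor of a linear graph parametrization. Equivalently, one can choose an orthonormal basis of $\xi_t^\perp$ consisting of $(1,-t,0,\ldots)/\sqrt{1+t^2}$ together with $e_3,\ldots,e_n$, and substitute $s=x\sqrt{1+t^2}$ for the arc-length coordinate along the first basis vector.
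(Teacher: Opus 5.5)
Your proposal is correct and follows the paper's proof essentially verbatim. Both use the same parametrization $(x,w)\mapsto(x,-tx,w)$ of $\xi_t^\perp$ with volume factor $\sqrt{1+t^2}$, the same $(n-2)$-ball fibres of radius $\sqrt{\pos{r(\abs x)^2-t^2x^2}}$, and the same evenness in $x$. (One minor correction: $n\geq3$ is assumed simply so that the fibres are balls of dimension at least one, avoiding the degenerate exponent $0$ and $B_2^0$ when $n=2$; it has nothing to do with $n=3$ specifically.)
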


\begin{proof}

To compute the section, write
$z=(z_1,\ldots,z_{n-1})\in\R^{n-1}$.  The equation of $\xi_t^\perp$ is
$tx+z_1=0$, so it is parametrized by
$
 (x,z_2,\ldots,z_{n-1})
 \longmapsto(x,-tx,z_2,\ldots,z_{n-1}).
$
At a fixed $x$, the condition that this point belong to $M_r$ is
\[
 t^2x^2+\sum_{j=2}^{n-1}z_j^2\leq r(\abs{x})^2.
\]
Thus the slice in the variables $z_2,\ldots,z_{n-1}$ is an $(n-2)$-ball of radius
$\sqrt{\pos{r(\abs{x})^2-t^2x^2}}$, see Figure~\ref{fig:meridian-geometry}.  The coordinate vector corresponding
to $x$ is $(1,-t,0,\ldots,0)$, whose length is $\sqrt{1+t^2}$; the other
$n-2$ coordinate vectors are the standard orthonormal vectors in the
$z_2,\ldots,z_{n-1}$ variables.  Hence the induced $(n-1)$-dimensional
volume element on $\xi_t^\perp$ is
$\sqrt{1+t^2}\,\dd x\dd z_2\cdots\dd z_{n-1}$.  Integrating the
volumes of these slices and using symmetry in $x$ proves
\eqref{eq:section-general}.
\end{proof}
Before computing the projection volume, we recall the two equivalent
forms of Cauchy's projection formula; see
\cite[Formula~(A.45), p.~408]{GardnerBook} and
\cite[Chapters~2 and~5]{SchneiderBook}.  If $K$ is a convex body in $\R^n$ and
$\xi\in\Sph^{n-1}$, then
\begin{align}
 \bigl|K\mid\xi^\perp\bigr|_{n-1}
 &=\frac12\int_{\partial K}\abs{\langle n_y,\xi\rangle}
                  \dd\mathcal H^{n-1}(y),
                  \label{eq:Cauchy-projection-boundary}\\
 \bigl|K\mid\xi^\perp\bigr|_{n-1}
 &=\frac12\int_{\Sph^{n-1}}\abs{\langle u,\xi\rangle}\dd S_K(u).
                  \label{eq:Cauchy-projection-sphere}
\end{align}
Here $n_y$ is the outer unit normal to $K$ at almost every
$y\in\partial K$, $\mathcal H^{n-1}$ denotes $(n-1)$-dimensional
Hausdorff measure, and $S_K$ is the surface area measure of $K$: for a
Borel set $\Omega\subset\Sph^{n-1}$, $S_K(\Omega)$ is the
$(n-1)$-dimensional area of the boundary points whose outer unit normal
belongs to $\Omega$.
\begin{lemma}
\label{lem:projection-formula}
Assume $n\geq3$ and, in addition, that
$r\in C^2((0,1))$.  For every finite $t\geq0$,
\begin{equation}\label{eq:projection-prekernel}
 P_{n,r}(t)=\frac{|\Sph^{n-3}|_{n-3}}{\sqrt{1+t^2}}
 \int_0^1r(x)^{n-2}\mathcal J_n(-r'(x)t)\dd x,
\end{equation}
where
\begin{equation}\label{eq:Jndef}
 \mathcal J_n(q)=\int_{-1}^1\abs{q+s}
 (1-s^2)^{(n-4)/2}\dd s
 \qquad\text{for all }q\geq0.
\end{equation}
\end{lemma}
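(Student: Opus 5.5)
We plan to apply the boundary form \eqref{eq:Cauchy-projection-boundary} of Cauchy's formula with $\xi=\xi_t$, splitting $\partial M_r$ into its two halves $x\geq0$ and $x\leq0$; by the symmetry $x\mapsto-x$ together with $z\mapsto -z$ (which maps $\xi_t$ to $-\xi_t$ and leaves $\abs{\langle n_y,\xi_t\rangle}$ unchanged), the two halves contribute equally. This doubling cancels the factor $\tfrac12$, so it suffices to compute $\int\abs{\langle n_y,\xi_t\rangle}\dd\mathcal H^{n-1}$ over the part of the lateral surface $\{\norm z_2=r(x),\ 0<x<1\}$. The flat pieces have measure zero: since $r(1)=0$ there is no end cap, and the point $x=0$ is a null set.

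\textbf{Parametrizing the lateral surface.} We write $z=r(x)\theta$ with $\theta\in\Sph^{n-2}$, so that $y=(x,r(x)\theta)$. For $r\in C^2$, the outer unit normal at $y$ is
\[
 n_y=\frac{(-r'(x),\theta)}{\sqrt{1+r'(x)^2}},
\]
and the surface element is
\[
 \dd\mathcal H^{n-1}=r(x)^{n-2}\sqrt{1+r'(x)^2}\,\dd x\,\dd\theta ,
\]
where $\dd\theta$ is area measure on $\Sph^{n-2}$. The two factors $\sqrt{1+r'^2}$ cancel. Writing $\theta_1$ for the first coordinate of $\theta$, we have $\langle n_y,\xi_t\rangle\sqrt{1+r'^2}=(-r't+\theta_1)/\sqrt{1+t^2}$. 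Therefore
\[
 P_{n,r}(t)=\frac{1}{\sqrt{1+t^2}}\int_0^1 r(x)^{n-2}\int_{\Sph^{n-2}}\abs{-r'(x)t+\theta_1}\dd\theta\,\dd x .
\]
Here $q=-r'(x)t\geq0$ because $r$ is decreasing.

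\textbf{Reducing the spherical integral.} For a function depending only on $\theta_1=s$ on $\Sph^{n-2}$ (a sphere of dimension $n-2$ in $\R^{n-1}$), the standard formula is
\[
 \int_{\Sph^{n-2}}f(\theta_1)\dd\theta=|\Sph^{n-3}|_{n-3}\int_{-1}^1 f(s)(1-s^2)^{(n-4)/2}\dd s .
\]
It comes from slicing $\Sph^{n-2}$ into $(n-3)$-spheres of radius $\sqrt{1-s^2}$ with length element $\dd s/\sqrt{1-s^2}$. Applying it with $f(s)=\abs{q+s}$ gives exactly $|\Sph^{n-3}|_{n-3}\,\mathcal J_n(q)$, which is \eqref{eq:projection-prekernel}. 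For $n=3$ the formula reads $|\Sph^0|=2$ with weight $(1-s^2)^{-1/2}$, which still holds as the arclength parametrization of the circle.

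\textbf{Main obstacle.} The delicate point is justification at the endpoints. The function $r'$ may blow up as $x\to1$ (for instance when $M_r$ is near a ball), and it may be discontinuous at $x=0$. Since $r$ is only assumed $C^2$ on $(0,1)$, we handle this by applying Cauchy's formula on the almost-everywhere smooth boundary and integrating over $x\in(\delta,1-\delta)$. We then let $\delta\to0$ by monotone convergence: the integrand is nonnegative, and the endpoint strips have vanishing contribution because their total surface area tends to zero (by convexity, the surface area of a convex cap is bounded by the area of its base cross-section). We also check that $\theta\mapsto$ the outer normal formula is valid at every lateral point, which holds because $r>0$ on $[0,1)$.
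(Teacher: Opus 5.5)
Your proposal is correct and follows essentially the same route as the paper: Cauchy's boundary formula with the parametrization $(\pm x,r(x)\theta)$, doubling by a symmetry (you use $y\mapsto-y$, the paper uses $\omega_1\mapsto-\omega_1$), cancellation of $\sqrt{1+r'(x)^2}$, and spherical slicing in the first coordinate. The only difference is the endpoint justification: the paper bounds the omitted strip by $\int_{1-\delta}^1 r(x)^{n-2}(1+|r'(x)|)\dd x\to0$, whereas your monotone convergence argument already suffices because all integrands are nonnegative, so your parenthetical claim that a convex cap has surface area at most that of its base is not needed (and it is false as stated, since a cone is a counterexample).
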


\begin{proof}
We apply \eqref{eq:Cauchy-projection-boundary} to $K=M_r$.
For $x\in(0,1)$, parametrize the two halves of $\partial M_r$ by
$
 X_\pm(x,\omega)=(\pm x,r(x)\omega),$ where $ \omega\in\Sph^{n-2}.
$
Let $\eta_1,\ldots,\eta_{n-2}$ be an orthonormal basis of the tangent
space to $\Sph^{n-2}$ at $\omega$.  The coordinate tangent vectors are
\[
 \partial_xX_\pm=(\pm1,r'(x)\omega),\qquad
 D_{\eta_j}X_\pm=(0,r(x)\eta_j).
\]
Their scalar products are
\[
 \langle\partial_xX_\pm,\partial_xX_\pm\rangle=1+r'(x)^2,
 \qquad
 \langle\partial_xX_\pm,D_{\eta_j}X_\pm\rangle=0,
 \qquad
 \langle D_{\eta_i}X_\pm,D_{\eta_j}X_\pm\rangle
 =r(x)^2\delta_{ij}.
\]
Therefore the Gram determinant is
$(1+r'(x)^2)r(x)^{2(n-2)}$.  The square root of this determinant is the
Jacobian of the parametrization, and hence
\[
 \dd\mathcal H^{n-1}(X_\pm(x,\omega))
 =r(x)^{n-2}\sqrt{1+r'(x)^2}\,\dd x\dd\omega.
\]
 We note that
\[
 n_{X_\pm(x,\omega)}
 =\frac{(\mp r'(x),\omega)}{\sqrt{1+r'(x)^2}}.
\]
If $r'$ is unbounded near $1$, we justify integrability by first
evaluating \eqref{eq:Cauchy-projection-boundary} over
$0<x<1-\delta$, for some small $\delta>0$ and then letting $\delta\downarrow0$.  For each fixed
$t$, the omitted contribution is bounded by a constant multiple of
\[
 \int_{1-\delta}^1r(x)^{n-2}\bigl(1+|r'(x)|\bigr)\dd x,
\]
which tends to zero.  Indeed, the term without $|r'|$ tends to zero by
boundedness of $r$, while
\[
 \int_{1-\delta}^1r(x)^{n-2}|r'(x)|\dd x
 =\frac{r(1-\delta)^{n-1}}{n-1}\longrightarrow0.
\]
The equator $x=0$ and the two tips have zero $(n-1)$-dimensional measure and
do not affect Cauchy's integral.

Combining the formulas for $n_{X_\pm(x,\omega)}$ and the surface
element with \eqref{eq:tdef} gives
\[
 \abs{\langle n_{X_\pm(x,\omega)},\xi_t\rangle}
 \dd\mathcal H^{n-1}
 =\frac{r(x)^{n-2}}{\sqrt{1+t^2}}
   \abs{\mp r'(x)t+\omega_1}\,\dd x\dd\omega.
\]
The spherical integrals from the two halves are equal after the change
$\omega_1\mapsto-\omega_1$.  Thus their sum cancels the factor $1/2$ in
\eqref{eq:Cauchy-projection-boundary}, and
\[
 P_{n,r}(t)=\frac1{\sqrt{1+t^2}}\int_0^1r(x)^{n-2}
 \left(\int_{\Sph^{n-2}}\abs{-r'(x)t+\omega_1}\,\dd\omega\right)\dd x.
\]
Next, for every continuous function $F:[-1,1]\to\R$, spherical
slicing in the first coordinate gives
\[
 \int_{\Sph^{n-2}}F(\omega_1)\dd\omega
 =|\Sph^{n-3}|_{n-3}\int_{-1}^1
 F(s)(1-s^2)^{(n-4)/2}\dd s.
\]
Applying this identity with $F(s)=\abs{-r'(x)t+s}$ proves
\eqref{eq:projection-prekernel}.
\end{proof}
\begin{remark}
Notice that $-r'(x)\geq0$, as required in \eqref{eq:Jndef}.  For the remainder of this section and throughout
Section~\ref{sec:cylinder}, we assume $n\geq5$. In particular, $\mathcal J_n''$ is
well-defined and continuous at $q=1$.  We shall use the identities
\begin{equation}\label{eq:J-identities}
 \mathcal J_n(q)-q\mathcal J_n'(q)
 =\frac{2}{n-2}\pos{1-q^2}^{(n-2)/2},
 \qquad
 \mathcal J_n''(q)=2\pos{1-q^2}^{(n-4)/2}.
\end{equation}

For $0\leq q\leq1$, splitting the integral at $s=-q$ gives
\[
 \mathcal J_n(q)-q\mathcal J_n'(q)
 =2\int_q^1s(1-s^2)^{(n-4)/2}\dd s
 =\frac{2}{n-2}(1-q^2)^{(n-2)/2},
\]
and direct differentiation gives the second identity in
\eqref{eq:J-identities}.  For $q\geq1$, $\mathcal J_n(q)$ is linear, so both
left-hand sides in \eqref{eq:J-identities} vanish.

We also note that, at the two endpoint directions, the section and
projection volumes agree:
\[
\begin{aligned}
 A_{n,r}(0)=P_{n,r}(0)
 &=2|B_2^{n-2}|\int_0^1r(x)^{n-2}\dd x,\\
 A_{n,r}(\infty)=P_{n,r}(\infty)
 &=|B_2^{n-1}|r(0)^{n-1}.
\end{aligned}
\]
\end{remark}
The next lemma gives the first-variation formulas for the volume, sections,
and projections of $M_r$ under a linear perturbation of $r$.
\begin{lemma}\label{lem:first-variations}
Assume $n\geq5$ and $r\in C^2((0,1))$.  Let $g$ be $C^2$ and
compactly supported in $(0,1)$, and define
$r_s(x)=r(x)+s g(x)$.  Suppose that there is $s_0>0$ such that
$r_s$ is nonnegative, decreasing, and concave for every
$0\leq s<s_0$.  Then, for every finite $t\geq0$, the right derivatives at $s=0$ are
\begin{align}
 \left.\frac{\mathrm d}{\mathrm d s}|M_{r_s}|\right|_{s=0}
 &=2(n-1)|B_2^{n-1}|\int_0^1r(x)^{n-2}g(x)\dd x,
 \label{eq:dV-general}\\
 \left.\frac{\mathrm d}{\mathrm d s}A_{n,r_s}(t)\right|_{s=0}
 &=\int_0^1S_{n,x}(t)g(x)\dd x,
 \label{eq:dA-general}\\
 S_{n,x}(t)
 &=2(n-2)|B_2^{n-2}|\sqrt{1+t^2}\,
 r(x)\pos{r(x)^2-x^2t^2}^{(n-4)/2},
 \label{eq:S-kernel}\\
 \left.\frac{\mathrm d}{\mathrm d s}P_{n,r_s}(t)\right|_{s=0}
 &=\int_0^1Q_{n,x}(t)g(x)\dd x,
 \label{eq:dP-general}\\
 Q_{n,x}(t)
 &=\frac{2|\Sph^{n-3}|_{n-3}r(x)^{n-3}}
         {\sqrt{1+t^2}}
 \pos{1-r'(x)^2t^2}^{(n-4)/2}
 \bigl(1-r'(x)^2t^2-r(x)r''(x)t^2\bigr).
 \label{eq:Q-kernel}
\end{align}
\end{lemma}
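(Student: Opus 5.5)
We obtain all three formulas by differentiating under the integral sign in the representations already available: \eqref{eq:volume-general} for the volume, Lemma~\ref{lem:volume-section-formulas} for sections, and Lemma~\ref{lem:projection-formula} for projections. Throughout, $\operatorname{supp} g\subset[a,b]$ with $0<a<b<1$. Outside $[a,b]$ the integrands do not depend on $s$. On $[a,b]$ the functions $r,r',r''$ are bounded and $r\geq r(b)>0$, so $r_s$ is bounded away from zero for small $s$.

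For the volume, $\frac{\mathrm d}{\mathrm ds}r_s^{n-1}=(n-1)r_s^{n-2}g$, which is continuous and uniformly bounded on $[a,b]$ for $s$ small. Dominated convergence then gives \eqref{eq:dV-general}.

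For the sections, set $\phi(y)=\pos{y}^{(n-2)/2}$. Since $n\geq5$, we have $(n-2)/2\geq 3/2$, so $\phi\in C^1(\R)$ with $\phi'(y)=\frac{n-2}{2}\pos{y}^{(n-4)/2}$. This derivative is continuous because $(n-4)/2\geq1/2>0$. Differentiating $\phi(r_s(x)^2-t^2x^2)$ in $s$ gives $(n-2)\,r_s g\,\pos{r_s^2-t^2x^2}^{(n-4)/2}$. This is continuous in $(s,x)$ and bounded, so the mean value theorem and dominated convergence justify the interchange. Multiplying by $2|B_2^{n-2}|\sqrt{1+t^2}$ gives \eqref{eq:dA-general} with the kernel \eqref{eq:S-kernel}.

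For the projections, we differentiate \eqref{eq:projection-prekernel}:
\[
\frac{\mathrm d}{\mathrm ds}\Bigl[r_s^{n-2}\mathcal J_n(-r_s't)\Bigr]_{s=0}
=(n-2)r^{n-3}g\,\mathcal J_n(-r't)-t\,r^{n-2}\mathcal J_n'(-r't)\,g'.
\]
Interchanging derivative and integral is legitimate: $\mathcal J_n$ is $C^1$ (indeed $C^2$ for $n\geq5$), all quantities are bounded on $[a,b]$, and the hypothesis that $r_s$ is decreasing guarantees $-r_s't\geq0$, so $\mathcal J_n$ is evaluated only on its domain $q\geq0$.

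The remaining step, which is the only substantive computation, is to integrate the $g'$ term by parts. The boundary terms vanish by compact support. Writing $q(x)=-r'(x)t$, we have $q'=-r''t$ and
\[
\frac{\mathrm d}{\mathrm dx}\Bigl[r^{n-2}\mathcal J_n'(q)\Bigr]=(n-2)r^{n-3}r'\mathcal J_n'(q)-t\,r^{n-2}r''\mathcal J_n''(q).
\]
Substituting $r'=-q/t$ turns the first term into $-(n-2)r^{n-3}(q/t)\mathcal J_n'(q)$ when $t>0$; for $t=0$ the whole $g'$ term is absent. Collecting terms, the coefficient of $g$ becomes
\[
(n-2)r^{n-3}\bigl(\mathcal J_n(q)-q\mathcal J_n'(q)\bigr)-t^2r^{n-2}r''\mathcal J_n''(q).
\]
By \eqref{eq:J-identities} this equals
\[
2r^{n-3}\pos{1-q^2}^{(n-4)/2}\bigl[(1-q^2)-r r''t^2\bigr],
\]
where the prefactor $\pos{1-q^2}^{(n-2)/2}$ from the first identity has been written as $\pos{1-q^2}^{(n-4)/2}(1-q^2)$. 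On the set where $q>1$ both sides vanish, so the formula holds there as well. Since $q^2=r'^2t^2$, multiplying by $|\Sph^{n-3}|_{n-3}/\sqrt{1+t^2}$ gives exactly \eqref{eq:Q-kernel}.

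The main care point is that $\mathcal J_n''$ must be continuous at $q=1$ for the integration by parts to be valid. This is where $n\geq5$ is used: there $\mathcal J_n'$ is $C^1$, hence absolutely continuous along $x\mapsto q(x)$, which is $C^1$ on $[a,b]$. With this in place the computation is routine.
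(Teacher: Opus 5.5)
Your proposal is correct and follows essentially the same route as the paper. You differentiate \eqref{eq:volume-general}, \eqref{eq:section-general}, and \eqref{eq:projection-prekernel} under the integral sign, integrate the $g'$ term by parts, and reduce the resulting coefficient of $g$ with the identities \eqref{eq:J-identities}; your added justifications (continuity of $\mathcal J_n''$ for $n\geq5$, and $r$ bounded below on $\operatorname{supp}g$) are just more explicit versions of the paper's.
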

\begin{remark}
    We note that for $0<x<1$, the projection kernel has the cutoff property $
 Q_{n,x}(t)=0$
 whenever  $|r'(x)|t\geq1$.
When $|r'(x)|t<1$, concavity gives
$
 1-r'(x)^2t^2-r(x)r''(x)t^2\geq0.
$
Consequently, $Q_{n,x}(t)\geq0$ for every $0<x<1$ and $t\geq0$.
\end{remark}
\begin{proof}[Proof of Lemma~\ref{lem:first-variations}] Since $g$ is compactly supported in $(0,1)$, we have $g(0)=g(1)=0$, so
formulas \eqref{eq:volume-general}, \eqref{eq:section-general}, and
\eqref{eq:projection-prekernel} apply to $r_s$.  Differentiating
\eqref{eq:volume-general} under the integral sign gives
\eqref{eq:dV-general}.  
Next, since $y\mapsto\pos{y}^{(n-2)/2}$ is continuously
differentiable for $n\geq5$, including at $y=0$, differentiation of
the section integrand gives
\[
 \left.\frac{\mathrm d}{\mathrm ds}
 \pos{r_s(x)^2-t^2x^2}^{(n-2)/2}\right|_{s=0}
 =(n-2)r(x)g(x)
   \pos{r(x)^2-t^2x^2}^{(n-4)/2}.
\]
The compact support of $g$ supplies an integrable bound, so differentiation
under the integral in \eqref{eq:section-general} is justified.  After
multiplication by its outside factor, this is precisely
\eqref{eq:dA-general}--\eqref{eq:S-kernel}.

For the projection, differentiation under the integral sign is
justified because $g$ and $g'$ are supported in a compact subinterval
of $(0,1)$; on this subinterval all factors in the integrand and their
$s$-derivatives are bounded for $s$ near zero.  Since $
-r_s'(x)=-r'(x)-sg'(x),$ 
the product and chain rules applied to
\eqref{eq:projection-prekernel} give
\begin{align*}
 \left.\frac{\mathrm d}{\mathrm d s}P_{n,r_s}(t)\right|_{s=0}
 &=\frac{(n-2)|\Sph^{n-3}|_{n-3}}{\sqrt{1+t^2}}
 \int_0^1 r(x)^{n-3}\mathcal J_n(-r'(x)t)g(x)\dd x\\
 &\quad-\frac{t|\Sph^{n-3}|_{n-3}}{\sqrt{1+t^2}}
 \int_0^1 r(x)^{n-2}\mathcal J_n'(-r'(x)t)g'(x)\dd x.
\end{align*}
Because $g$ is compactly supported, integration by parts in the second
term produces no boundary term and, after its common positive factor is
removed, gives
\[
 \int_0^1
 \frac{\mathrm d}{\mathrm dx}
 \left( r(x)^{n-2} t\mathcal J_n'(-r'(x)t)\right)g(x)\dd x.
\]
The derivative inside the last integral is
\[
 (n-2) r(x)^{n-3}r'(x)t\mathcal J_n'(-r'(x)t)
 -r(x)^{n-2}r''(x)t^2\mathcal J_n''(-r'(x)t).
\]
Combining this with the first term leaves the integrand
\[
 (n-2)r(x)^{n-3}
   \bigl(\mathcal J_n(-r'(x)t)+r'(x)t\mathcal J_n'(-r'(x)t)\bigr)
 -r(x)^{n-2}r''(x)t^2\mathcal J_n''(-r'(x)t).
\]
Substitution of the two identities in \eqref{eq:J-identities}, followed by
factoring out
$2r(x)^{n-3}\pos{1-r'(x)^2t^2}^{(n-4)/2}$, gives exactly
\eqref{eq:dP-general}--\eqref{eq:Q-kernel}.
\end{proof}

We shall combine a shape perturbation with a small homothety.  Let
$\lambda>0$ be a contraction rate, to be chosen later.  For every
sufficiently small $s\geq0$ for which $r_s=r+sg$ is nonnegative, decreasing,
and concave and $1-\lambda s>0$, define
\begin{equation}\label{eq:homothetic-perturbation}
 K_s=(1-\lambda s)M_{r_s},\qquad L=M_r.
\end{equation}
By homogeneity of the relevant volumes, we obtain
\begin{align}
 \left.\frac{\mathrm d}{\mathrm d s}
 \bigl|K_s\cap\xi_t^\perp\bigr|_{n-1}\right|_{s=0}
 &=\left.\frac{\mathrm d}{\mathrm d s}A_{n,r_s}(t)\right|_{s=0}
   -(n-1)\lambda A_{n,r}(t),
 \label{eq:dAcontract}\\
 \left.\frac{\mathrm d}{\mathrm d s}
 \bigl|K_s\mid\xi_t^\perp\bigr|_{n-1}\right|_{s=0}
 &=\left.\frac{\mathrm d}{\mathrm d s}P_{n,r_s}(t)\right|_{s=0}
   -(n-1)\lambda P_{n,r}(t),
 \label{eq:dPcontract}\\
\left.\frac{\mathrm d}{\mathrm d s}|K_s|\right|_{s=0}
&=\left.\frac{\mathrm d}{\mathrm d s}|M_{r_s}|\right|_{s=0}
   -n\lambda|M_r|.
 \label{eq:dVcontract}
\end{align}
The axial direction $t=\infty$ is covered directly.  Since $g(0)=0$, we
have $r_s(0)=r(0)$, and hence
\begin{equation}\label{common}
 \bigl|K_s\cap e_1^\perp\bigr|_{n-1}
 =\bigl|K_s\mid e_1^\perp\bigr|_{n-1}
 =(1-\lambda s)^{n-1}|B_2^{n-1}|r(0)^{n-1}.
\end{equation}
The derivative of \eqref{common} is
$
 -(n-1)\lambda(1-\lambda s)^{n-2}
 |B_2^{n-1}|r(0)^{n-1},
$
which is negative whenever $1-\lambda s>0$. 
The construction consists of finding $g$ and then choosing $\lambda$ so
that \eqref{eq:dAcontract} and \eqref{eq:dPcontract} are uniformly negative,
while \eqref{eq:dVcontract} is positive.

\section{Origin-symmetric counterexamples in dimensions
\texorpdfstring{$n\geq5$}{n >= 5}}
\label{sec:cylinder}
Fix $n\geq5$.  The construction has two stages.  First, a formal two-point
perturbation of the cylinder determines the required signs and the choice
of parameters.  We then replace the cylinder and the atomic perturbation
by a strictly concave profile and a $C^2$ perturbing function, without
changing these signs.

\subsection{A formal two-point perturbation}
Let $
 C=[-1,1]\times B_2^{n-1}$.
With $\xi_t$ as in \eqref{eq:tdef}, its section and
projection functions are
\begin{align}
 A_C(t)&=\bigl|C\cap\xi_t^\perp\bigr|_{n-1}=2|B_2^{n-2}|\sqrt{1+t^2}\,\Theta_n(t),
 \label{eq:cylinder-section}\\
&\qquad\qquad\mbox{  where } \Theta_n(t)=\int_0^{\min(1,1/t)}
 (1-t^2x^2)^{(n-2)/2}\dd x,
 \label{eq:In}\\
 P_C(t)&=\bigl|C\mid\xi_t^\perp\bigr|_{n-1}=\frac{2|B_2^{n-2}|
                 +|B_2^{n-1}|t}{\sqrt{1+t^2}}.
 \label{eq:cylinder-projection}
\end{align}
In \eqref{eq:In}, we use the convention $1/0=\infty$, so that
$\Theta_n(0)=1$.
Formula~\eqref{eq:cylinder-section} follows from the same slicing calculation
as in the proof of Lemma~\ref{lem:volume-section-formulas}, with the constant
profile $r\equiv1$.  Although this profile does not satisfy the endpoint
condition $r(1)=0$, the section calculation is unchanged.
To show  \eqref{eq:cylinder-projection}, we use \eqref{eq:Cauchy-projection-sphere}. The first term in the
numerator comes from the lateral surface, and the second comes from
the two end caps.  The cylinder
is used here only as a limiting body.  All variations below are supported
away from $x=1$ and therefore leave its end caps unchanged.

The first-variation formulas in Section~\ref{sec:formulas} are kernel
pairings.  Accordingly, if $\mu$ is a finite signed measure supported in
a compact subset of $(0,1)$, we use the formal notation
\begin{align*}
 D_\mu|M_r|
 &={}2(n-1)|B_2^{n-1}|
   \int_{(0,1)}r(x)^{n-2}\dd\mu(x),\\
 D_\mu A_{n,r}(t)
 &={}\int_{(0,1)}S_{n,x}(t)\dd\mu(x),\\
 D_\mu P_{n,r}(t)
 &={}\int_{(0,1)}Q_{n,x}(t)\dd\mu(x).
\end{align*}
For each fixed $t$, these pairings are well defined because the kernels are
continuous on the support of $\mu$.  When $\mu=g(x)\dd x$, they agree with
the usual first variations.

For such a measure $\mu$, define the formal cylinder first variations by
\begin{align*}
 D_\mu A_C(t)
 &={}
 2(n-2)|B_2^{n-2}|\sqrt{1+t^2}
 \int_{(0,1)}
 \pos{1-x^2t^2}^{(n-4)/2}\dd\mu(x),\\
 D_\mu P_C(t)
 &={}
 \frac{2|\Sph^{n-3}|_{n-3}}{\sqrt{1+t^2}}\,
 \mu((0,1)).
\end{align*}
These formulas are obtained from the section kernel and the lateral
projection kernel in \eqref{eq:S-kernel} and \eqref{eq:Q-kernel} by setting
$r=1$ and $r'=r''=0$.  Since $\mu$ is supported in a compact subset of
$(0,1)$, the formal variation has no end-cap contribution.
For an atomic measure, these expressions denote only their linear extensions; no
convex body is being asserted at this stage.  The next lemma identifies a
two-point measure $\mu$ for which the formal section estimate holds.  Its proof
actually identifies a family of such measures; a particular member will be
selected after the additional projection condition has been derived.  The
uniform smooth approximation in
Lemma~\ref{lem:uniform-bump-approximation} below then returns us to an actual
meridian perturbation.

\begin{lemma}\label{lem:cylinder-section}
There exists a signed two-point measure $\mu$, independent of the
dimension, supported in $(0,1)$ and having positive total mass, such that,
for every $n\geq5$ and every $t\geq0$,
\[
 D_\mu A_C(t)\leq(n-2)\mu((0,1))A_C(t).
\]
\end{lemma}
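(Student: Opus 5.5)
The plan is to reduce the statement to a scalar inequality between one-variable functions, and then choose a two-point measure $\mu=a\,\delta_p-b\,\delta_q$ with $a>b>0$ and $0<q<p<1$. Dividing by the common positive factor $2|B_2^{n-2}|\sqrt{1+t^2}$, and using \eqref{eq:cylinder-section} and the definition of $D_\mu A_C$, the claim becomes
\[
 (n-2)\int_{(0,1)}k_x(t)\dd\mu(x)\leq(n-2)\,\mu((0,1))\,\Theta_n(t),
 \qquad k_x(t)=\pos{1-x^2t^2}^{(n-4)/2}.
\]
Since $\mu((0,1))=a-b$, it is enough to prove
\begin{equation}\label{eq:plan-scalar}
 a\,k_p(t)-b\,k_q(t)\leq(a-b)\,\Theta_n(t)\qquad\text{for all }t\geq0.
\end{equation}
At $t=0$ both sides equal $a-b$, so equality holds there.

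The heuristic is that $\Theta_n$ is an average of $(1-t^2x^2)^{(n-2)/2}$ over $x\in[0,1]$. The positive atom should therefore sit at a point $p$ near $1$, where the kernel decays faster than this average. The negative atom should sit at a point $q$ near $0$, where the kernel stays close to $1$ for a long time.

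The key steps in \eqref{eq:plan-scalar} are as follows.

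\emph{Large $t$.} For $t\geq1/q$ the left side vanishes while $\Theta_n(t)>0$.

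\emph{Intermediate $t$.} For $1/p\leq t<1/q$ the left side equals $-b\,k_q(t)\leq0<(a-b)\Theta_n(t)$.

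\emph{Small $t$.} Only the range $0\leq t<1/p$ remains. There I first do a second-order check at $t=0$. We have $k_x(t)=1-\tfrac{n-4}{2}x^2t^2+O(t^4)$ and $\Theta_n(t)=1-\tfrac{n-2}{6}t^2+O(t^4)$, so we need
\[
 \tfrac{n-4}{2}\,(ap^2-bq^2)\geq\tfrac{n-2}{6}\,(a-b)
\]
for every $n\geq5$. The worst case is $n=5$. With $a=2$, $b=1$ and $q$ small, this requires roughly $p^2>\tfrac23$. I would commit to something like $a=2$, $b=1$, $p=0.9$ and a small fixed $q$, and adjust these constants if the global check fails.

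For the global check on $[0,1/p)$ I would bound the three terms separately:
\begin{itemize}
\item Use $k_p(t)\leq\exp\bigl(-\tfrac{n-4}{2}p^2t^2\bigr)$.
\item Use $k_q(t)\geq(1-q^2/p^2)^{(n-4)/2}$, or better $k_q(t)\geq1-\tfrac{n-4}{2}q^2t^2$.
\item Bound $\Theta_n(t)$ from below. For $t\leq1$ this can be done by $\int_0^1\exp\bigl(-\tfrac{n-2}{2}\tfrac{t^2x^2}{1-t^2x^2}\bigr)\dd x$, or simply by restricting the integral to $x\leq\min(1,1/t)$ and using a Gaussian lower bound on a shorter interval.
\end{itemize}
After rescaling $u=\sqrt n\,t$, these bounds turn \eqref{eq:plan-scalar} into a comparison of explicit functions that is essentially uniform in $n$.

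The main obstacle is this uniformity in the dimension with a \emph{fixed} measure. As $n\to\infty$, every kernel collapses onto the scale $t\sim n^{-1/2}$, and the bound $k_q\geq(1-q^2/p^2)^{(n-4)/2}$ degenerates. I therefore plan to split into two regimes.
\begin{itemize}
\item For $n\geq n_0$, in the rescaled variable $u$, compare the Gaussian limits $2e^{-p^2u^2/2}-e^{-q^2u^2/2}$ against $\int_0^1e^{-u^2x^2/2}\dd x$, keeping explicit error terms. One must check that $p$ and $q$ can be fixed so that this limiting inequality holds with some room, except near $u=0$ where the second-order check governs.
\item For $5\leq n<n_0$, verify \eqref{eq:plan-scalar} directly from the closed forms of $k_x$ and $\Theta_n$.
\end{itemize}
Because only existence is needed, I would keep $q$ as a free small parameter and $p$ close to $1$, taking $q\to0$ if necessary to make the margins work. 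The proof then produces a whole family of admissible measures, consistent with the paper's remark that a particular member will be selected later for the projection condition.
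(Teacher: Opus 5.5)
\textbf{The core inequality is left unproved.} Your reduction to $a\,k_p(t)-b\,k_q(t)\le(a-b)\Theta_n(t)$ is correct, and so is your handling of $t\ge1/p$. But the proposal stops where the content of the lemma lies: the inequality on $0\le t<1/p$, for all $n\ge5$ simultaneously, is never established. What you offer there is a programme rather than an argument: a Gaussian comparison in $u=\sqrt n\,t$ for $n\ge n_0$, an unspecified ``direct'' check for $5\le n<n_0$, and constants to be adjusted ``if the global check fails''.

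Both sides agree at $t=0$. So the finite-$n$ corrections to the Gaussian limits would have to be beaten, uniformly in $n$, by a margin that itself vanishes like $u^2$. A second-order check at $t=0$ for each $n$ gives no such uniform neighbourhood. Moreover, your ``better'' bound $k_q(t)\ge1-\tfrac{n-4}{2}q^2t^2$ is false for $n=5$: the exponent is $1/2$, and $\sqrt{1-y}\le1-y/2$. That is precisely the dimension in which the negative atom is indispensable.

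\textbf{The missing idea is the pointwise ordering of the kernels.} Since $q<p$, you have $k_q\ge k_p$, so $a k_p-bk_q\le(a-b)k_p$. For $n\ge6$ it therefore suffices to prove the single-kernel bound $k_p\le\Theta_n$, and the location of the negative atom becomes irrelevant. The paper proves this bound for all $n\ge6$ at once when $p^2\ge3/4$, with no asymptotics:
\begin{enumerate}
\item On $[0,1]$, Jensen gives $\Theta_n(t)\ge(1-t^2/3)^{(n-2)/2}$. The ratio of this to $(1-p^2t^2)^{(n-4)/2}$ is increasing in $t^2$, because its logarithmic derivative has the sign of $3(n-4)p^2-(n-2)+2p^2t^2$.
\item On $[1,1/p]$, one has $\Theta_n(t)=C_n/t$ with $C_n=\int_0^1(1-y^2)^{(n-2)/2}\dd y$. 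Meanwhile $t\,k_p(t)$ is decreasing, and $C_n\ge4^{-(n-4)/2}\ge(1-p^2)^{(n-4)/2}$.
\end{enumerate}

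For $n=5$ this reduction cannot work, because $k_p\le\Theta_5$ near $t=0$ would force $p^2\ge1$. There the negative atom must genuinely be used. The paper does this with the correct one-sided bounds $1-y\le\sqrt{1-y}\le1-y/2$ and $(1-y)^{3/2}\ge1-\tfrac32y$, which yield an explicit sufficient condition on the parameters.

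Your concrete choice $a=2$, $b=1$, $p=0.9$, $q\le0.3$ is in fact admissible by these two arguments. For $n=5$ and $1\le t<1/p$, note that the left side is then negative. The proposal, however, supplies neither argument.

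A minor slip: at $n=5$ your second-order condition reads $2p^2-q^2\ge1$, so roughly $p^2>1/2$, not $2/3$.
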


\begin{proof}
Consider measures of the form
\[
 \mu=-\delta_a+(1+m)\delta_b,
 \qquad 0<a<b<1,\quad m>0,
\]
where $\delta_x$ denotes the unit point mass at $x$.  Then
$\mu((0,1))=m$.  By the definition of
$D_\mu A_C$ and \eqref{eq:cylinder-section}, it is enough to prove
\begin{equation}\label{eq:cylinder-section-kernel}
 -\pos{1-a^2t^2}^{(n-4)/2}
 +(1+m)\pos{1-b^2t^2}^{(n-4)/2}
 \leq m\Theta_n(t).
\end{equation}
We first derive a convenient range of $b$ that suffices when $n\geq6$.  If
\begin{equation}\label{eq:cylinder-kernel-bound}
 \pos{1-b^2t^2}^{(n-4)/2}\leq \Theta_n(t)
 \qquad\text{for all }t\geq0,
\end{equation}
then \eqref{eq:cylinder-section-kernel} follows for every $a<b$ and $m>0$,
because
\[
-\pos{1-a^2t^2}^{(n-4)/2}
   +(1+m)\pos{1-b^2t^2}^{(n-4)/2}
\leq m\pos{1-b^2t^2}^{(n-4)/2}
 \leq m\Theta_n(t).
\]
For $0\leq t\leq1$, Jensen's inequality, applied to the convex function
$y\mapsto(1-t^2y)^{(n-2)/2}$, gives
\[
 \begin{aligned}
 \Theta_n(t)
 =\int_0^1(1-t^2x^2)^{(n-2)/2}\dd x\geq
 \left(1-t^2\int_0^1x^2\dd x\right)^{(n-2)/2}
 =\left(1-\frac{t^2}{3}\right)^{(n-2)/2}.
 \end{aligned}
\]
The logarithmic derivative, with respect
to $t^2$, of the ratio
\[
 \frac{(1-t^2/3)^{(n-2)/2}}
      {(1-b^2t^2)^{(n-4)/2}}
\]
has the sign of $
 3(n-4)b^2-(n-2)+2b^2t^2.$ 
Thus the ratio is increasing for every $n\geq6$ whenever $b^2\geq2/3$.
In particular, the desired estimate holds for $0\leq t\leq1$ throughout
the range $b^2\geq3/4$.  This slightly stronger
lower bound on $b$ also gives a simple tail estimate.  Indeed, if
$1\leq t\leq1/b$, then
\[
 \Theta_n(t)=\frac{C_n}{t},\qquad
 C_n=\int_0^1(1-y^2)^{(n-2)/2}\dd y.
\]
For $1\leq t<1/b$,
\[
 \frac{\mathrm d}{\mathrm dt}
 \left[t(1-b^2t^2)^{(n-4)/2}\right]
 =
 (1-b^2t^2)^{(n-6)/2}
 \bigl(1-(n-3)b^2t^2\bigr)<0.
\]
Hence $t(1-b^2t^2)^{(n-4)/2}$ is decreasing on
$[1,1/b]$ by continuity. Moreover,
\[
 C_n\geq\int_0^{1/2}(1-y^2)^{(n-2)/2}\dd y
 \geq\frac12\left(\frac34\right)^{(n-2)/2}
 \geq4^{-(n-4)/2}\geq(1-b^2)^{(n-4)/2}.
\]
This proves \eqref{eq:cylinder-kernel-bound}; for $t\geq1/b$, its left-hand
side vanishes.  Hence every $b$ with $\sqrt3/2\leq b<1$ works for all
$n\geq6$.

We next derive a condition on $a$, $b$, and $m$ ensuring that the same
measure also works when $n=5$.  For $0\leq t\leq1$, the elementary inequalities
\[
 1-y\leq\sqrt{1-y}\leq1-\frac y2
 \mbox{ for all } 0\leq y\leq1
\]
and $(1-y)^{3/2}\geq1-\frac32y$ give
\begin{align*}
 -\sqrt{1-a^2t^2}+(1+m)\sqrt{1-b^2t^2}
 &\leq m+
 \left(a^2-\frac{(1+m)b^2}{2}\right)t^2,\\
 m\Theta_5(t)
 &\geq m\int_0^1
 \left(1-\frac32t^2x^2\right)\dd x
 =m-\frac{mt^2}{2}.
\end{align*}
Consequently, the stronger single condition
\begin{equation}\label{eq:cylinder-n5-condition}
 m+a^2-\frac{(1+m)b^2}{2}<0
\end{equation}
is sufficient.  It proves
\eqref{eq:cylinder-section-kernel} for
$0\leq t\leq1$.  The same upper bound for its left-hand side remains valid
for $1\leq t\leq1/b$; condition
\eqref{eq:cylinder-n5-condition} makes that bound negative at $t=1$ and
decreasing in $t^2$.  For $t\geq1/b$, the positive $b$-term vanishes.

We have therefore shown that every choice
\[
 \frac{\sqrt3}{2}\leq b<1,\qquad 0<a<b,\qquad m>0,
\]
satisfying \eqref{eq:cylinder-n5-condition} has the required property.  This
family is nonempty: for any such $b$, one may first choose
$0<a<b/\sqrt2$ and then take $m>0$ sufficiently small.  This proves the
lemma.  We leave the parameters unfixed until the additional projection
condition is imposed below.
\end{proof}
We now turn to projections.  Let $a$, $b$, $m$, and $\mu$ be any choice
from the family obtained in the proof of
Lemma~\ref{lem:cylinder-section}.  At a flat interior point of the
cylinder, the projection kernel \eqref{eq:Q-kernel} is independent of the
point.  Hence, together with Lemma~\ref{lem:cylinder-section},
\[
 \begin{aligned}
 D_\mu A_C(t)
 &\leq(n-2)mA_C(t),\\
 D_\mu P_C(t)
 &=\frac{2m|\Sph^{n-3}|_{n-3}}{\sqrt{1+t^2}}
 \leq(n-2)mP_C(t),
 \end{aligned}
\]
where $
 |\Sph^{n-3}|_{n-3}=(n-2)|B_2^{n-2}|$. 
We select a particular member of this family after deriving a sufficient
additional condition from the projection kernel of a near-cylinder body.

\subsection{Choosing the parameters and the near-cylinder profile}

We next use the projection kernel to derive a sufficient additional
condition on $a$, $b$, and $m$.  Here $\eta>0$ measures the departure from
the cylinder and will be fixed after the uniform estimates below.  We are looking for
a family of profiles for which, at the two fixed perturbation sites
$x=a,b$,
\[
 r_\eta(x)=1+O(\eta),\qquad
 r_\eta'(x)=O(\eta),\qquad
 r_\eta''(x)=O(\eta)
 \quad\text{as }\eta\downarrow0.
\]
For each fixed $t$, the projection kernel \eqref{eq:Q-kernel} evaluated at
such a profile converges to the flat-cylinder kernel.  The convergence is
not uniform in $t$: when $t$ is of order $\eta^{-1}$, the outside factor
$1/\sqrt{1+t^2}$ is of order $\eta$, whereas the curvature term can be of
order $\eta^{-1}$.  For the profile chosen below, these factors cancel and
leave a nonzero contribution of order one.  We therefore prove a direct
one-sided estimate for the signed two-point combination, uniform in $t$.

The factor
\[
 \pos{1-r_\eta'(x)^2t^2}^{(n-4)/2}
\]
in the projection kernel is nonzero only for
$t<1/|r_\eta'(x)|$.  Since the two-point variation has negative mass at
$a$ and positive mass at $b$, we want
\[
 |r_\eta'(a)|\leq |r_\eta'(b)|.
\]
Then the factor evaluated at $a$ remains nonzero whenever the factor
evaluated at $b$ is nonzero; in particular, the positive contribution
from $b$ can never remain after the negative contribution from $a$ has
vanished.  We also want $-r_\eta''(x)$ to decrease with $x$, so that the negative curvature contribution evaluated at $a$ can
dominate the positive contribution evaluated at $b$.  These two
requirements suggest choosing
\[
 -r_\eta'(x)=\eta\sqrt{x}.
\]
Then
\[
 -r_\eta''(x)=\frac{\eta}{2\sqrt{x}},
\]
and the two quantities have exactly the required monotonicity.

Under the directional rescaling
\[
 t=\frac{\sqrt{\sigma}}{\eta},\qquad \sigma\geq0,
\]
the two relevant factors in \eqref{eq:Q-kernel} become
\[
 \pos{1-r_\eta'(x)^2t^2}^{(n-4)/2}
 =\pos{1-x\sigma}^{(n-4)/2}
\]
and
\[
 -r_\eta(x)r_\eta''(x)t^2
 =\frac{r_\eta(x)}{2\eta\sqrt{x}}\,\sigma.
\]
This also shows explicitly why convergence to the cylinder kernel is not
uniform.  For either $x=a,b$ and fixed $0<\sigma<1/x$, substitution in the
full kernel gives
\begin{align*}
 Q_{n,x}\left(\frac{\sqrt{\sigma}}{\eta}\right)
 &=
 \frac{2|\Sph^{n-3}|_{n-3}r_\eta(x)^{n-3}}
      {\sqrt{1+\sigma/\eta^2}}\,
 (1-x\sigma)^{(n-4)/2}
 \left(
  1-x\sigma+\frac{r_\eta(x)\sigma}{2\eta\sqrt{x}}
 \right)\\
 &\longrightarrow
 |\Sph^{n-3}|_{n-3}
 \sqrt{\frac{\sigma}{x}}\,
 (1-x\sigma)^{(n-4)/2},
\end{align*}
whereas the corresponding flat-cylinder kernel is
\[
 \frac{2|\Sph^{n-3}|_{n-3}}
      {\sqrt{1+\sigma/\eta^2}}
 \longrightarrow0.
\]
Thus the individual near-cylinder kernels do not converge uniformly to
the cylinder kernel.

At $x=a$ and $x=b$, the cutoff factor vanishes for
$\sigma\geq1/a$ and $\sigma\geq1/b$, respectively.  Since $a<b$, we have
$1/a>1/b$.  Thus, whenever the factor evaluated at $b$ is nonzero, the
factor evaluated at $a$ is also nonzero, while for
$1/b\leq\sigma<1/a$ only the negative $a$-term remains.  After the common
factor $\sigma/(2\eta)$ is removed, the leading curvature coefficients at
the two sites are $1/\sqrt a$ and $(1+m)/\sqrt b$.  We are therefore led
to impose the sufficient restriction
\begin{equation}\label{eq:cylinder-boundary-condition}
 \sqrt{\frac ba}>1+m.
\end{equation}
The exact uniform projection estimate resulting from this choice is proved
in Lemma~\ref{lem:projection-boundary-layer}.

We now select the parameters after both sufficient conditions 
\eqref{eq:cylinder-n5-condition} and
\eqref{eq:cylinder-boundary-condition} have been obtained.  The section
estimate allows $b\geq\sqrt3/2$, so we first choose
$b=\sqrt3/2$ and then take $a=b/3$:
\begin{equation}\label{eq:cylinder-parameter-values}
 b=\frac{\sqrt3}{2},\qquad
 a=\frac b3=\frac1{2\sqrt3}.
\end{equation}
With $b^2=3/4$ and $a^2=1/12$, condition
\eqref{eq:cylinder-n5-condition} becomes
\[
 m+a^2-\frac{(1+m)b^2}{2}
 =\frac{5m}{8}-\frac7{24}<0,
\]
or equivalently $m<7/15$.  Moreover,
$1+7/15=22/15<\sqrt3=\sqrt{b/a}$.  Hence both conditions hold for every
\begin{equation}\label{eq:cylinder-m-range}
 0<m<\frac7{15}.
\end{equation}
Fix once and for all any such $m$; this choice may be made independently of
$n$.  We can now fix the signed measure
\begin{equation}\label{eq:cylinder-point-variation}
 \mu_0=-\delta_a+(1+m)\delta_b.
\end{equation}
Then $\mu_0((0,1))=m>0$, and
Lemma~\ref{lem:cylinder-section} applies to this measure.

We next choose the contraction rate.  The formal cylinder section and
projection variations have relative coefficient at most $(n-2)m$, whereas
the normalized volume shape derivative has coefficient $(n-1)m$.

Formally, in accordance with
\eqref{eq:dAcontract}--\eqref{eq:dVcontract}, the contracted first variations
(understood here as the linear extensions evaluated at $\mu_0$) are
\begin{equation}\label{dervsec}
\left. \frac{\mathrm d}{\mathrm d s}
  \bigl|K_s\cap\xi_t^\perp\bigr|_{n-1}\right|_{s=0}
 =D_{\mu_0}A_C(t)-(n-1)\lambda A_C(t) \leq\bigl((n-2)m-(n-1)\lambda\bigr)A_C(t),
 \end{equation}
 \begin{equation}\label{dervproj}
\left.\frac{\mathrm d}{\mathrm d s}
  \bigl|K_s\mid\xi_t^\perp\bigr|_{n-1}\right|_{s=0}
 =D_{\mu_0}P_C(t)-(n-1)\lambda P_C(t)\leq\bigl((n-2)m-(n-1)\lambda\bigr)P_C(t).
\end{equation}
Since $A_C(t)$ and $P_C(t)$ are positive, both contracted derivatives are
strictly negative whenever
\[
 (n-2)m-(n-1)\lambda<0,
 \qquad\text{that is,}\qquad
 \lambda>\frac{(n-2)m}{n-1}.
\]
For volume, $|C|=2|B_2^{n-1}|$ and the formal shape derivative is
\[
 D_{\mu_0}|C|
 =2(n-1)|B_2^{n-1}|m=(n-1)m|C|.
\]
Since volume is homogeneous of degree $n$, the contraction contributes
$-n\lambda|C|$.  Therefore its formal contracted first variation is
\begin{equation}\label{dervvol}
 \left.\frac{\mathrm d}{\mathrm d s}|K_s|\right|_{s=0}
 =D_{\mu_0}|C|-n\lambda|C|=\bigl((n-1)m-n\lambda\bigr)|C|,
\end{equation}
which is strictly positive whenever
\[
 (n-1)m-n\lambda>0,
 \qquad\text{that is,}\qquad
 \lambda<\frac{(n-1)m}{n}.
\]
Combining the two sign conditions, we require
\[
 \frac{(n-2)m}{n-1}
 <
 \lambda
 <
 \frac{(n-1)m}{n}.
\]
To maximize the smaller of the two first-order gaps, we choose $\lambda$
so that they are equal:
\[
 (n-1)\lambda-(n-2)m=(n-1)m-n\lambda.
\]
Thus
\[
 \lambda=\frac{(2n-3)m}{2n-1}.
\]
For this choice, the relative first-variation coefficient for section and
projection volumes at the formal cylinder is
\begin{equation}\label{eq:data-margin}
 (n-2)m-(n-1)\lambda=-\frac{m}{2n-1},
\end{equation}
while the normalized first-variation coefficient for volume is
\begin{equation}\label{eq:volume-margin}
 (n-1)m-n\lambda=\frac{m}{2n-1}.
\end{equation}
Thus the section and projection coefficients are negative, whereas the
volume coefficient is positive, with the same absolute gap.

With $a$, $b$, $m$, $\mu_0$, and $\lambda$ now fixed, we realize the desired
local behavior.  On $[0,1-\eta]$, impose
$r_\eta'(x)=-\eta\sqrt{x}$ and normalize by
$r_\eta(0)=1$.  Integration gives
$r_\eta(x)=1-(2\eta/3)x^{3/2}$ on this interval. 
We close the profile over $[1-\eta,1]$ by subtracting a cubic cap.  Since
$y\mapsto\pos{y}^3$ and its first two derivatives vanish at $y=0$, the cap
joins the uncapped profile in $C^2$ at $x=1-\eta$.  After normalization by
$\eta^3$, the cap equals $1$ at $x=1$.  Its coefficient $1-2\eta/3$ is
precisely the value at $x=1$ of the uncapped profile, and therefore the
resulting profile vanishes at $x=1$.

Accordingly, choose $0<\eta<1-b$.  Then
$0<a<b<1-\eta$, so both perturbation sites lie in the uncapped part of the
profile.  Define
\begin{equation}\label{eq:reta}
 r_\eta(x)=
 \begin{cases}
 \displaystyle
 1-\frac{2\eta}{3}x^{3/2},
 &0\leq x\leq1-\eta,\\[3mm]
 \displaystyle
 1-\frac{2\eta}{3}x^{3/2}
 -\left(1-\frac{2\eta}{3}\right)
  \left(\frac{x-(1-\eta)}{\eta}\right)^3,
 &1-\eta\leq x\leq1.
 \end{cases}
\end{equation}
By construction, $r_\eta$ is $C^2$ on $(0,1)$, and, for $0<x<1$, direct
differentiation gives
\begin{align}
 r_\eta'(x)
 &=-\eta\sqrt{x}
 -\frac{3(1-2\eta/3)}{\eta^3}\pos{x-(1-\eta)}^{\,2}<0,
 \label{eq:reta-prime}\\
 r_\eta''(x)
 &=-\frac{\eta}{2\sqrt{x}}
 -\frac{6(1-2\eta/3)}{\eta^3}\pos{x-(1-\eta)}<0.
 \label{eq:reta-second}
\end{align}
Hence $r_\eta$ is decreasing and strictly concave, with
$r_\eta(0)=1$ and $r_\eta(1)=0$.

Let $L_\eta=M_{r_\eta}$.  Since $r_\eta(x)\geq1-\eta$ for
$0\leq x\leq1-\eta$, while $r_\eta(x)\leq1$ everywhere,
\begin{equation}\label{eq:cylinder-inclusion}
 (1-\eta)C\subset L_\eta\subset C.
\end{equation}
In particular, $|L_\eta|\to|C|$, while the central-section and projection
volumes converge uniformly in direction to the corresponding data of $C$.
More explicitly,
\begin{align*}
 (1-\eta)^{n-1}A_C(t)&\leq A_{n,r_\eta}(t)\leq A_C(t),\\
 (1-\eta)^{n-1}P_C(t)&\leq P_{n,r_\eta}(t)\leq P_C(t)
\end{align*}
for all $t\in[0,\infty]$.

Since $a,b<1-\eta$, both perturbation sites lie in the first branch of
\eqref{eq:reta}.  Thus, for $x=a,b$, we have the exact formulas
\begin{equation}\label{eq:local-exact}
 r_\eta(x)=1-\frac{2\eta}{3}x^{3/2},\qquad
 r_\eta'(x)=-\eta\sqrt{x},\qquad
 r_\eta''(x)=-\frac{\eta}{2\sqrt{x}}.
\end{equation}

Figure~\ref{fig:cylinder-perturbation} illustrates the cylinder
approximation, the cubic cap, and the signed two-point variation.

\begin{figure}[htbp]
\centering
\begin{tikzpicture}[x=11.0cm,y=3.55cm,>=Latex,font=\small]
  \path[fill=blue!6]
    (0,0)--(0,1)
    -- plot[domain=0:0.92,samples=70]
       (\x,{1-0.0533333333*(\x)^(1.5)})
    -- plot[domain=0.92:1,samples=45]
       (\x,{1-0.0533333333*(\x)^(1.5)
              -0.9466666667*((\x-0.92)/0.08)^3})
    -- (1,0)--cycle;
  \draw[->] (-0.02,0)--(1.06,0) node[right] {$x$};
  \draw[->] (0,-0.04)--(0,1.23) node[above] {$r$};
  \draw[densely dashed,thick,black!55]
    (0,1)--(1,1)--(1,0);
  \draw[very thick,blue!60!black]
    plot[domain=0:0.92,samples=70]
      (\x,{1-0.0533333333*(\x)^(1.5)})
    plot[domain=0.92:1,samples=45]
      (\x,{1-0.0533333333*(\x)^(1.5)
             -0.9466666667*((\x-0.92)/0.08)^3});
  \node[blue!60!black,fill=white,inner sep=1.5pt] at (0.51,1.145)
    {$r_\eta(x)=1-\dfrac{2\eta}{3}x^{3/2}$};

  \draw[densely dotted,black!45] (0.288675,0)--(0.288675,0.992);
  \draw[densely dotted,black!45] (0.866025,0)--(0.866025,0.957);
  \draw[densely dotted,black!45] (0.92,0)--(0.92,0.953);
  \draw[->,very thick,purple!72!black]
    (0.288675,1.17)--(0.288675,0.85);
  \node[purple!72!black,anchor=east] at (0.278,1.12) {$-1$};
  \draw[->,very thick,green!45!black]
    (0.866025,0.76)--(0.866025,1.14);
  \node[green!45!black,anchor=west] at (0.875,1.10) {$+(1+m)$};
  \node[below] at (0.288675,0) {$a=b/3$};
  \node[below] at (0.866025,0) {$b$};
%  \node[below right=-1pt] at (0.92,0) {$1-\eta$};
%  \node[align=center,black!70] at (0.84,0.43) {cubic\\cap};
%  \draw[->,black!65] (0.875,0.47)--(0.965,0.56);
\draw (0.92,0.015)--(0.92,-0.015);
\draw (1,0.015)--(1,-0.015);
\node[below=11pt] at (0.92,0) {$1-\eta$};
\node[below] at (1,0) {$1$};

\node[align=left,anchor=east,black!80,fill=white,
      inner sep=1.5pt,font=\scriptsize] at (0.84,0.47)
  {cubic cap, $1-\eta\leq x\leq1$:\\[-1pt]
   $\displaystyle r_\eta(x)=1-\frac{2\eta}{3}x^{3/2}$\\[-1pt]
   $\displaystyle\phantom{r_\eta(x)=}
     -\left(1-\frac{2\eta}{3}\right)
      \left(\frac{x-(1-\eta)}{\eta}\right)^3$};

\draw[->,black!65] (0.845,0.39)--(0.965,0.56);
\end{tikzpicture}
\caption{A schematic picture of the cylinder approximation and the signed
two-point variation.
The dashed meridian is the limiting cylinder.  Before the cubic cap, the
solid profile has $r_\eta'(x)=-\eta\sqrt{x}$.  The arrows represent the
weights $-1$ at $a$ and $1+m$ at $b$ in
$\mu_0=-\delta_a+(1+m)\delta_b$.}
\label{fig:cylinder-perturbation}
\end{figure}

\subsection{The projection estimate for the two-point variation}

Here and below, the term \emph{boundary layer} refers to the range of
directions described by the rescaling $t=\sqrt{\sigma}/\eta$; it is not a
physical boundary layer of the body.

For $x=a,b$, remove the common positive factor from
\eqref{eq:Q-kernel} and write
\begin{equation}\label{eq:Psi-eta}
 \Psi_{\eta,x}(t)=r_\eta(x)^{n-3}
 \pos{1-r_\eta'(x)^2t^2}^{(n-4)/2}
 \bigl(1-r_\eta'(x)^2t^2-r_\eta(x)r_\eta''(x)t^2\bigr).
\end{equation}
The following lemma gives the required estimate for the two-point part of
the projection variation.

\begin{lemma}
\label{lem:projection-boundary-layer}
For every $0<\eta<1-b$,
\begin{equation}\label{eq:Psi-projection-bound}
 -\Psi_{\eta,a}(t)+(1+m)\Psi_{\eta,b}(t)\leq m
 \qquad\text{for all }t\geq0.
\end{equation}
\end{lemma}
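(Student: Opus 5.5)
The plan is to pass to the boundary-layer variable and split each $\Psi_{\eta,x}$ into a ``flat'' part and a ``curvature'' part, then compare the two sites termwise. Since $a<b<1-\eta$, the exact formulas \eqref{eq:local-exact} apply at $x=a,b$. Set $\sigma=\eta^2t^2\geq0$, so that $r_\eta'(x)^2t^2=x\sigma$ and $-r_\eta(x)r_\eta''(x)t^2=r_\eta(x)\sigma/(2\eta\sqrt x)$. Write $\rho_x=r_\eta(x)=1-\frac{2\eta}{3}x^{3/2}$. Because $r_\eta$ is decreasing and $r_\eta\leq1$, we have
\[
 0<\rho_b\leq\rho_a\leq1 .
\]
Then \eqref{eq:Psi-eta} becomes
\[
 \Psi_{\eta,x}
 =\rho_x^{n-3}\pos{1-x\sigma}^{(n-2)/2}
 +\frac{\sigma}{2\eta}\,\frac{\rho_x^{n-2}}{\sqrt x}\,\pos{1-x\sigma}^{(n-4)/2}.
\]
Here I use $\pos{y}^{(n-4)/2}\,y=\pos{y}^{(n-2)/2}$, which is valid since $n\geq5$. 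Both exponents $(n-2)/2$ and $(n-4)/2$ are positive, so every factor vanishes on the cutoff region and is nonnegative elsewhere.

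For the flat part, I will show
\[
 -\rho_a^{n-3}\pos{1-a\sigma}^{(n-2)/2}+(1+m)\rho_b^{n-3}\pos{1-b\sigma}^{(n-2)/2}
 \leq m\,\rho_b^{n-3}\pos{1-b\sigma}^{(n-2)/2}\leq m .
\]
The first inequality holds because $\rho_a^{n-3}\geq\rho_b^{n-3}$ and $\pos{1-a\sigma}\geq\pos{1-b\sigma}$ (as $a<b$). This lets me replace the subtracted $a$-term by the smaller $b$-term. The second inequality holds because $\rho_b\leq1$ and $\pos{1-b\sigma}\leq1$.

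For the curvature part, since $\sigma/(2\eta)\geq0$, it suffices to show
\[
 -\frac{\rho_a^{n-2}}{\sqrt a}\pos{1-a\sigma}^{(n-4)/2}
 +(1+m)\frac{\rho_b^{n-2}}{\sqrt b}\pos{1-b\sigma}^{(n-4)/2}\leq0 .
\]
This follows from three comparisons of nonnegative factors: $\rho_b^{n-2}\leq\rho_a^{n-2}$, $\pos{1-b\sigma}^{(n-4)/2}\leq\pos{1-a\sigma}^{(n-4)/2}$, and $(1+m)/\sqrt b<1/\sqrt a$. The last is exactly condition \eqref{eq:cylinder-boundary-condition}. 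In particular, on the range $1/b\leq\sigma<1/a$ only the negative $a$-term survives, and for $\sigma\geq1/a$ both terms vanish. Adding the two parts gives \eqref{eq:Psi-projection-bound} for every $\sigma\geq0$, hence for every $t\geq0$. The estimate is uniform in $\eta\in(0,1-b)$ because no bound depends on $\eta$ beyond the sign of $\sigma/(2\eta)$.

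I do not expect a serious obstacle. The delicate point in the surrounding discussion was the nonuniformity of the kernel as $t\sim\eta^{-1}$. Here it is absorbed entirely by the curvature term having a favorable sign, which is guaranteed by the monotonicity built into $r_\eta$ ($|r_\eta'|$ increasing and $-r_\eta''$ decreasing in $x$) together with \eqref{eq:cylinder-boundary-condition}. The steps that need the most care are the bookkeeping of the cutoffs $\pos{\cdot}$, and the use of $n\geq5$ so that the factor $\pos{1-x\sigma}^{(n-4)/2}$ is a nonnegative function monotone in $x$ rather than singular at the cutoff.
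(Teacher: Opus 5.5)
Your proof is correct. It differs from the paper's in how $\Psi_{\eta,x}$ is split.

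The paper keeps the common prefactor $r_\eta(x)^{n-3}\pos{1-x\sigma}^{(n-4)/2}$ and writes the bracket as $1+\sigma c_x$ with $c_x=\frac{r_\eta(x)}{2\eta\sqrt x}-x$. The term $-x\sigma$ therefore stays attached to the curvature term. This forces the paper to verify the coefficient ordering \eqref{eq:projection-coefficient-order}, namely $c_a>(1+m)c_b>0$. It does so through the explicit identity $\eta c_x=\frac{1}{2\sqrt x}-\frac{4\eta}{3}x$, arithmetic with $a=b/3$, and a separate positivity check of $c_b$ that uses $\eta<1-b$.

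You instead fold $1-x\sigma$ into the cutoff via $\pos{y}^{(n-4)/2}y=\pos{y}^{(n-2)/2}$, which is valid because $n\geq5$. The curvature coefficient then becomes $\rho_x^{n-2}/\sqrt x$ with nothing subtracted. The $O(\eta)$ correction is absorbed by $\rho_b\leq\rho_a$, and the only quantitative input is \eqref{eq:cylinder-boundary-condition}.

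What your route buys: no arithmetic specific to the values \eqref{eq:cylinder-parameter-values}, and no lower bound on $c_b$ is needed. Your curvature term also matches directly the heuristic ``leading curvature coefficients'' $1/\sqrt a$ and $(1+m)/\sqrt b$ discussed before the parameter choice. As a small bonus, you get the slightly sharper bound $m\rho_b^{n-3}\pos{1-b\sigma}^{(n-2)/2}$.

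What the paper's route buys: it leaves the bracket in \eqref{eq:Psi-eta} intact, so a single comparison \eqref{eq:projection-factor-order} of the common prefactor suffices. The price is the explicit coefficient check. You trade that check for two trivially monotone comparisons, one with exponent $(n-2)/2$ and power $n-3$, the other with exponent $(n-4)/2$ and power $n-2$.
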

\begin{proof}
Put $\sigma=\eta^2t^2$.  By \eqref{eq:local-exact}, for $x=a,b$,
\[
 \Psi_{\eta,x}(t)
 =r_\eta(x)^{n-3}\pos{1-x\sigma}^{(n-4)/2}
 \left[
  1+\sigma\left(\frac{r_\eta(x)}{2\eta\sqrt{x}}-x\right)
 \right].
\]
Since $a<b$ and $r_\eta$ is decreasing,
\begin{equation}\label{eq:projection-factor-order}
 r_\eta(a)^{n-3}\pos{1-a\sigma}^{(n-4)/2}
 \geq
 r_\eta(b)^{n-3}\pos{1-b\sigma}^{(n-4)/2}
 \qquad\text{for all }\sigma\geq0.
\end{equation}
For $x=a,b$, formula \eqref{eq:local-exact} gives
\[
 \eta\left(\frac{r_\eta(x)}{2\eta\sqrt{x}}-x\right)
 =\frac1{2\sqrt{x}}-\frac{4\eta}{3}x.
\]
By \eqref{eq:cylinder-parameter-values}, $a=b/3$, and
\eqref{eq:cylinder-m-range} gives $\sqrt{b/a}=\sqrt3>1+m$.  Therefore
\[
\eta\left[
 \left(\frac{r_\eta(a)}{2\eta\sqrt a}-a\right)
 -(1+m)\left(\frac{r_\eta(b)}{2\eta\sqrt b}-b\right)
 \right]
 =
 \frac{\sqrt3-(1+m)}{2\sqrt b}
 +\frac{4b\eta}{3}\left(m+\frac23\right)>0.
\]
Also, since $0<\eta<1-b$,
\[
 \eta\left(\frac{r_\eta(b)}{2\eta\sqrt b}-b\right)
 >\frac1{2\sqrt b}-\frac{4b(1-b)}3>0;
\]
the last inequality follows from $1/(2\sqrt b)>1/2$ and
$4b(1-b)/3\leq1/3$.  Consequently,
\begin{equation}\label{eq:projection-coefficient-order}
 \frac{r_\eta(a)}{2\eta\sqrt a}-a
 >(1+m)\left(\frac{r_\eta(b)}{2\eta\sqrt b}-b\right)>0.
\end{equation}

By \eqref{eq:projection-factor-order}, the terms independent of $\sigma$
satisfy
\[
-r_\eta(a)^{n-3}\pos{1-a\sigma}^{(n-4)/2}
 +(1+m)r_\eta(b)^{n-3}\pos{1-b\sigma}^{(n-4)/2}
 \leq
 m r_\eta(b)^{n-3}\pos{1-b\sigma}^{(n-4)/2}.
\]
By \eqref{eq:projection-factor-order} and
\eqref{eq:projection-coefficient-order}, the terms multiplied by $\sigma$
satisfy
\[
-\left(\frac{r_\eta(a)}{2\eta\sqrt a}-a\right)
 r_\eta(a)^{n-3}\pos{1-a\sigma}^{(n-4)/2}
 +(1+m)
 \left(\frac{r_\eta(b)}{2\eta\sqrt b}-b\right)
 r_\eta(b)^{n-3}\pos{1-b\sigma}^{(n-4)/2}
 \leq0.
\]
Combining the last two estimates, and using $0\leq r_\eta(b)\leq1$, gives
\[
 -\Psi_{\eta,a}(t)+(1+m)\Psi_{\eta,b}(t)
 \leq m r_\eta(b)^{n-3}\pos{1-b\sigma}^{(n-4)/2}
 \leq m.
\]
This proves the lemma.
\end{proof}

\subsection{From the two-point calculation to convex bodies}

We now evaluate the extensions of the first-variation kernels at the
profile $r_\eta$ against the signed measure $\mu_0$.  For sections, after
the common positive geometric factor from the proof of
Lemma~\ref{lem:cylinder-section} is removed, the relevant expression is
\[
 -r_\eta(a)^{n-3}
  \pos{1-\frac{a^2t^2}{r_\eta(a)^2}}^{(n-4)/2}
 +(1+m) r_\eta(b)^{n-3}
  \pos{1-\frac{b^2t^2}{r_\eta(b)^2}}^{(n-4)/2}.
\]
It vanishes for $t\geq1/a$ and converges uniformly on $[0,1/a]$ to the
left-hand side of \eqref{eq:cylinder-section-kernel}.  Consequently, there
is a number $\delta_\eta\geq0$, with $\delta_\eta\to0$ as
$\eta\downarrow0$, such that, for $0\leq t\leq1/a$, the displayed
expression is at most
\[
 m\Theta_n(t)+\delta_\eta.
\]
Since $\Theta_n$ has a positive minimum on $[0,1/a]$, we have
$\delta_\eta=o(1)\Theta_n(t)$ uniformly on this interval.  Restoring the common
geometric factor therefore gives
\[
 D_{\mu_0}A_{n,r_\eta}(t)
 \leq\left((n-2)m+o(1)\right)A_C(t)
 \qquad\text{uniformly in }t.
\]
Finally, \eqref{eq:cylinder-inclusion} gives
$A_C(t)\leq(1-\eta)^{-(n-1)}A_{n,r_\eta}(t)$, and hence
\begin{equation}\label{eq:approx-section-first}
 D_{\mu_0}A_{n,r_\eta}(t)
 \leq\left((n-2)m+o(1)\right)A_{n,r_\eta}(t)
 \qquad\text{uniformly in }t.
\end{equation}
For projections, Lemma~\ref{lem:projection-boundary-layer} gives directly
\[
 D_{\mu_0}P_{n,r_\eta}(t)
 \leq\frac{2m|\Sph^{n-3}|_{n-3}}{\sqrt{1+t^2}}.
\]
Using \eqref{eq:cylinder-inclusion},
\eqref{eq:cylinder-projection}, and
$|\Sph^{n-3}|_{n-3}
=(n-2)|B_2^{n-2}|$, we obtain the uniform relative bound
\begin{equation}\label{eq:approx-projection-first}
 D_{\mu_0}P_{n,r_\eta}(t)
 \leq\frac{(n-2)m}{(1-\eta)^{n-1}}
       P_{n,r_\eta}(t)
 =\left((n-2)m+o(1)\right)P_{n,r_\eta}(t).
\end{equation}
In \eqref{eq:approx-section-first} and
\eqref{eq:approx-projection-first}, $o(1)\to0$ as $\eta\downarrow0$, with
the dimension $n\geq5$ fixed, and the estimates are uniform in $t$.

The volume shape derivative, divided by
$2|B_2^{n-1}|$, is
\[
 (n-1)\bigl(-r_\eta(a)^{n-2}
               +(1+m)r_\eta(b)^{n-2}\bigr)
 \longrightarrow(n-1)m,
\]
while \eqref{eq:cylinder-inclusion} gives
$\int_0^1r_\eta(x)^{n-1}\dd x\to1$.

Now that $n$, $a$, $b$, $m$, and $\lambda$ have been fixed, the uniform
estimates above and
\eqref{eq:data-margin}--\eqref{eq:volume-margin} allow us to choose and fix
$0<\eta<1-b$ so small that
\begin{align}
 D_{\mu_0}A_{n,r_\eta}(t)-(n-1)\lambda A_{n,r_\eta}(t)
 &\leq-\frac{m}{2(2n-1)}A_{n,r_\eta}(t),\label{eq:atomic-A-gap}\\
 D_{\mu_0}P_{n,r_\eta}(t)-(n-1)\lambda P_{n,r_\eta}(t)
 &\leq-\frac{m}{2(2n-1)}P_{n,r_\eta}(t),\label{eq:atomic-P-gap}
\end{align}
uniformly in $t$, whereas the normalized contracted volume functional
\[
 (n-1)\bigl(-r_\eta(a)^{n-2}+(1+m)r_\eta(b)^{n-2}\bigr)
 -n\lambda\int_0^1r_\eta(x)^{n-1}\dd x
 \geq\frac{m}{2(2n-1)}.
\]
Here the first-variation terms are understood as the linear extensions to
the signed measure \eqref{eq:cylinder-point-variation} described above.
At this stage, $\mu_0$ is used only to evaluate these linear functionals;
it does not define an actual meridian profile.

For a body of revolution, the section and projection data depend only on
the angle between the normal direction and the axis of revolution.
Consequently, every unoriented direction is represented by some
$t\in[0,\infty]$ through \eqref{eq:tdef}, where $t=\infty$ represents the
axial normal.  The map
$
 t\longmapsto\frac{t}{\sqrt{1+t^2}}
$
identifies this parameter space with the compact interval $[0,1]$.  The positive continuous functions
$A_{n,r_\eta}$ and $P_{n,r_\eta}$ therefore have positive minima on this
interval.  Hence the relative estimates \eqref{eq:atomic-A-gap} and
\eqref{eq:atomic-P-gap} yield strict additive negative gaps, uniform in the
direction.  We may therefore replace $\mu_0$ by a $C^2$ approximation.
Let
\begin{equation}\label{eq:bump}
 \phi(y)=\frac{35}{32}\pos{1-y^2}^3,\qquad
 \phi_{\varepsilon,c}(x)=\frac1\varepsilon
 \phi\left(\frac{x-c}{\varepsilon}\right),
 \qquad \varepsilon>0,
\end{equation}
so that $\phi$ is nonnegative, $C^2$, supported on $[-1,1]$, and has
integral one.  Put
\begin{equation}\label{eq:gepsilon}
 g_\varepsilon=-\phi_{\varepsilon,a}+(1+m)\phi_{\varepsilon,b}.
\end{equation}
\begin{lemma}
\label{lem:uniform-bump-approximation}
Let $S_{n,x}(t)$ and $Q_{n,x}(t)$ be the kernels
\eqref{eq:S-kernel} and \eqref{eq:Q-kernel}, evaluated at the fixed
profile $r_\eta$.  Then, as $\varepsilon\downarrow0$,
\begin{align*}
 &\sup_{t\geq0}\left|
   \int_0^1S_{n,x}(t)g_\varepsilon(x)\dd x
   -\int_{(0,1)}S_{n,x}(t)\dd\mu_0(x)
  \right|\longrightarrow0,\\
 &\sup_{t\geq0}\left|
   \int_0^1Q_{n,x}(t)g_\varepsilon(x)\dd x
   -\int_{(0,1)}Q_{n,x}(t)\dd\mu_0(x)
  \right|\longrightarrow0,
\end{align*}
and
\[
 \int_0^1r_\eta(x)^{n-2}g_\varepsilon(x)\dd x
 \longrightarrow
 \int_{(0,1)}r_\eta(x)^{n-2}\dd\mu_0(x).
\]
\end{lemma}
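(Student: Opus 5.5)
Since $\phi_{\varepsilon,c}$ is a nonnegative probability density supported in $[c-\varepsilon,c+\varepsilon]$, for any kernel $F_x(t)$ we have
\[
 \int_0^1F_x(t)\phi_{\varepsilon,c}(x)\dd x-F_c(t)
 =\int_{c-\varepsilon}^{c+\varepsilon}\bigl(F_x(t)-F_c(t)\bigr)\phi_{\varepsilon,c}(x)\dd x .
\]
Hence the difference is bounded by $\sup_{|x-c|\le\varepsilon}|F_x(t)-F_c(t)|$. By linearity in $g_\varepsilon$, all three statements therefore reduce to one property: for $c\in\{a,b\}$ and $F\in\{S,Q\}$,
\[
 \omega_c(\varepsilon):=\sup_{t\ge0}\ \sup_{|x-c|\le\varepsilon}|F_x(t)-F_c(t)|\to0 .
\]
Fix $\varepsilon_0>0$ so small that $J=[a-\varepsilon_0,b+\varepsilon_0]\subset(0,1-\eta)$. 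On $J$ the profile is given by the first branch of \eqref{eq:reta}, so $r_\eta$, $r_\eta'$, $r_\eta''$ are continuous and explicit, with $r_\eta\ge1-\eta>0$ and $r_\eta''<0$. The volume statement is then immediate, since $r_\eta^{n-2}$ is continuous on $J$ and no uniformity in $t$ is needed.

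The main point is uniformity in $t\in[0,\infty)$. I would show that $(x,t)\mapsto F_x(t)$ extends continuously to $J\times[0,\infty]$, which is compact once $t$ is reparametrized by $\tau=t/\sqrt{1+t^2}\in[0,1]$. Uniform continuity on this compact set then gives $\omega_c(\varepsilon)\to0$.

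For the section kernel \eqref{eq:S-kernel}, $S_{n,x}(t)=0$ as soon as $t\ge r_\eta(x)/x$. On $J$ this cutoff is at most $1/(a-\varepsilon_0)$, so $S$ vanishes identically for large $t$. On the bounded set $J\times[0,1/(a-\varepsilon_0)]$ it is continuous, because $y\mapsto\pos{y}^{(n-4)/2}$ is continuous for $n\ge5$. Hence it extends by $0$ at $t=\infty$.

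For the projection kernel \eqref{eq:Q-kernel}, the cutoff is $t<1/|r_\eta'(x)|=1/(\eta\sqrt x)\le 1/(\eta\sqrt{a-\varepsilon_0})$, again uniformly bounded on $J$. So $Q_{n,x}(t)=0$ for all $t$ beyond a fixed $T$ and all $x\in J$. On $J\times[0,T]$ the expression is a product of continuous functions, the factor $\pos{1-r'^2t^2}^{(n-4)/2}$ being continuous since $n\ge5$. It therefore extends continuously by zero to $t=\infty$.

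The only delicate spot is the cutoff itself, where the bracket $1-r'^2t^2-rr''t^2$ does not vanish. Continuity across it comes solely from the factor $\pos{\cdot}^{(n-4)/2}$, whose exponent is $1/2$ when $n=5$: the function is continuous but not Lipschitz. Compactness still yields a uniform modulus of continuity, which is all that is needed, so I expect this step to be routine once the kernels are seen to be compactly supported in $t$ uniformly over $x\in J$.

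Finally, choose $\varepsilon<\varepsilon_0$. Then
\[
 \sup_t\Bigl|\int_0^1 F_x g_\varepsilon\dd x-\int_{(0,1)}F_x\dd\mu_0\Bigr|\le\omega_a(\varepsilon)+(1+m)\,\omega_b(\varepsilon)\to0 ,
\]
which proves the lemma.
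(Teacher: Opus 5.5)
Your proposal is correct and follows essentially the same route as the paper. The paper also reduces everything to the approximate-identity bound via a modulus of continuity of each kernel near $a$ and $b$, obtained from two facts. First, both kernels vanish for all $t$ beyond a fixed $T_0$ uniformly over a neighborhood of $\{a,b\}$. Second, they are uniformly continuous on the resulting compact set, using continuity of $y\mapsto\pos{y}^{(n-4)/2}$ for $n\geq5$. The only cosmetic differences are that you use one interval $J$ with the explicit first-branch formulas where the paper uses disjoint intervals $E_a,E_b$ with abstract bounds $x_0,R_0,d_0$, and that you phrase the tail via the compactified parameter $t/\sqrt{1+t^2}$ rather than directly.
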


\begin{proof}
This is a parameter-uniform version of the standard approximate-identity
argument, \cite[Theorem~8.14]{Folland}.  We include the details
because uniformity with respect to $t$ is essential here.

Choose disjoint closed intervals $E_a,E_b\subset(0,1)$ containing $a$ and
$b$ in their respective interiors, and let $E=E_a\cup E_b$.  For all
sufficiently small $\varepsilon>0$, the supports of
$\phi_{\varepsilon,a}$ and $\phi_{\varepsilon,b}$ are contained in $E_a$
and $E_b$, respectively.  Set
\[
 x_0=\min_{x\in E}x,
 \qquad R_0=\max_{x\in E}r_\eta(x),
 \qquad d_0=\min_{x\in E}\bigl(-r_\eta'(x)\bigr).
\]
All three numbers are positive.  If $t\geq R_0/x_0$, then
$x^2t^2\geq r_\eta(x)^2$ for every $x\in E$, and hence the section kernel
$S_{n,x}(t)$ vanishes on $E$.  Similarly, if $t\geq1/d_0$, then
$r_\eta'(x)^2t^2\geq1$ for every $x\in E$, so the projection kernel
$Q_{n,x}(t)$ vanishes on $E$.  Therefore, if
\[
 T_0=\max\left\{\frac{R_0}{x_0},\frac1{d_0}\right\},
\]
both kernels vanish on $E$ whenever $t\geq T_0$.

It remains to consider the compact set $E\times[0,T_0]$.  Both kernels are
continuous there and hence uniformly continuous.  At a cutoff, this uses
only the continuity of $y\mapsto\pos{y}^{(n-4)/2}$; in dimension five this
is the square-root function $y\mapsto\pos{y}^{1/2}$.  If
$\mathcal K(x,t)$ denotes either kernel, define
\[
 \omega_{\mathcal K}(\varepsilon)
 =\max_{c\in\{a,b\}}
  \sup_{\substack{x\in E,\ |x-c|\leq\varepsilon\\0\leq t\leq T_0}}
  \bigl|\mathcal K(x,t)-\mathcal K(c,t)\bigr|.
\]
Uniform continuity gives
$\omega_{\mathcal K}(\varepsilon)\to0$ as $\varepsilon\downarrow0$.
Since $\phi_{\varepsilon,c}$ is nonnegative, has integral one, and is
supported where $|x-c|\leq\varepsilon$, we have, for $c=a,b$,
\[
 \sup_{t\geq0}\left|
  \int_0^1\mathcal K(x,t)\phi_{\varepsilon,c}(x)\dd x
  -\mathcal K(c,t)
 \right|
 \leq\omega_{\mathcal K}(\varepsilon).
\]
Indeed, the estimate follows from the definition of
$\omega_{\mathcal K}$ for $0\leq t\leq T_0$, while both terms vanish for
$t\geq T_0$.  Taking the linear combination with coefficients $-1$ and
$1+m$ proves the first two assertions of the lemma.

Finally, continuity of $r_\eta^{n-2}$ on $E$ gives, for $c=a,b$,
\[
 \left|
  \int_0^1r_\eta(x)^{n-2}\phi_{\varepsilon,c}(x)\dd x
  -r_\eta(c)^{n-2}
 \right|
 \leq
 \sup_{|x-c|\leq\varepsilon}
 \bigl|r_\eta(x)^{n-2}-r_\eta(c)^{n-2}\bigr|
 \longrightarrow0
\]
as $\varepsilon\downarrow0$.  The same linear combination proves the
volume assertion.
\end{proof}

The contraction terms in \eqref{eq:dAcontract}--\eqref{eq:dVcontract} do
not depend on the approximating measure.  Lemma~\ref{lem:uniform-bump-approximation},
the strict gaps in \eqref{eq:atomic-A-gap}--\eqref{eq:atomic-P-gap}, and
the positive contracted volume margin established above therefore show
that the contracted section and projection derivatives remain uniformly
negative, while the contracted volume derivative remains positive, with
$g_\varepsilon(x)\dd x$ in place of $\mu_0$, provided that
$\varepsilon>0$ is sufficiently small.
 With $\eta$ fixed, fix such an $\varepsilon$.
 
Finally, fix $\eta$ and $\varepsilon$, and let
$D=\operatorname{supp}g_\varepsilon$.  Since $D$ is a compact subset of
$(0,1)$ and
\[
 r_\eta>0,\qquad -r_\eta'>0,\qquad -r_\eta''>0
 \quad\text{on }D,
\]
while $g_\varepsilon$, $g_\varepsilon'$, and
$g_\varepsilon''$ are bounded and vanish outside $D$, compactness shows
that there exists $\tau>0$ such that, for every $0\leq s\leq\tau$, the
profile $r_\eta+s g_\varepsilon$ is nonnegative on $[0,1]$, positive on
$[0,1)$, decreasing, and concave.  After decreasing $\tau$ if necessary,
we may also assume that $1-\lambda s>0$ on this interval.  Write
\[
 r_s=r_\eta+s g_\varepsilon,
 \qquad
 K_s=(1-\lambda s)M_{r_s},
 \qquad 0\leq s\leq\tau.
\]
In the following formulas, the kernels $S_{n,x}(t)$ and $Q_{n,x}(t)$ are
the expressions in \eqref{eq:S-kernel} and \eqref{eq:Q-kernel} evaluated
at the current profile $r_s$.  For every finite $t\geq0$, the exact
derivatives are
\begin{align*}
 \frac{\mathrm d}{\mathrm ds}
 \bigl|K_s\cap\xi_t^\perp\bigr|_{n-1}
 &=(1-\lambda s)^{n-2}
 \left[
  (1-\lambda s)\int_0^1S_{n,x}(t)g_\varepsilon(x)\dd x
  -(n-1)\lambda A_{n,r_s}(t)
 \right],\\
 \frac{\mathrm d}{\mathrm ds}
 \bigl|K_s\mid\xi_t^\perp\bigr|_{n-1}
 &=(1-\lambda s)^{n-2}
 \left[
  (1-\lambda s)\int_0^1Q_{n,x}(t)g_\varepsilon(x)\dd x
  -(n-1)\lambda P_{n,r_s}(t)
 \right],\\
 \frac{\mathrm d}{\mathrm ds}|K_s|
 &=2|B_2^{n-1}|(1-\lambda s)^{n-1}\\
 &\quad{}\times
 \left[
  (n-1)(1-\lambda s)
  \int_0^1r_s(x)^{n-2}g_\varepsilon(x)\dd x
  -n\lambda\int_0^1r_s(x)^{n-1}\dd x
 \right].
\end{align*}

Since $D$ is a compact subset of $(0,1)$, the profiles $r_s$ are uniformly
bounded on $D$, the coordinate $x$ is bounded away from zero there, and
$-r_s'$ is uniformly bounded away from zero.  Consequently, there is a
common finite number $T$, independent of $s\in[0,\tau]$, such that the two
shape integrals in the displayed data derivatives vanish for $t\geq T$.

For $0\leq t\leq T$, the kernel integrands are jointly continuous in
$(s,x,t)$ on the compact set
$[0,\tau]\times D\times[0,T]$.
Continuity at the cutoffs follows from $n\geq5$ and the continuity of
$y\mapsto\pos{y}^{(n-4)/2}$.  The section and projection functions in the
homothetic terms are jointly continuous in $(s,t)$ as well, by
\eqref{eq:section-general} and \eqref{eq:projection-prekernel}.  Hence the
exact contracted data derivatives converge uniformly, for
$0\leq t\leq T$, to their values at $s=0$ as $s\downarrow0$.

At $s=0$, the two data derivatives are bounded above by fixed negative
constants, uniformly in $t$, while the volume derivative is positive.
After decreasing $\tau$, these signs therefore persist for every
$0\leq s\leq\tau$ and $0\leq t\leq T$.  For finite $t\geq T$, the shape
integrals vanish, and the data derivatives are respectively
\[
 -(n-1)\lambda(1-\lambda s)^{n-2}A_{n,r_s}(t)
 \quad\text{and}\quad
 -(n-1)\lambda(1-\lambda s)^{n-2}P_{n,r_s}(t),
\]
so they are strictly negative.  The displayed volume derivative is
continuous in $s$ and remains positive after the same decrease of $\tau$.

The remaining axial direction $t=\infty$ is immediate.  Since
$g_\varepsilon(0)=0$, we have $r_s(0)=r_\eta(0)$, and hence
\[
 \bigl|K_s\cap e_1^\perp\bigr|_{n-1}
 =\bigl|K_s\mid e_1^\perp\bigr|_{n-1}
 =(1-\lambda s)^{n-1}|B_2^{n-1}|r_\eta(0)^{n-1}.
\]
Both axial data are therefore strictly decreasing in $s$.

Integrating these derivatives gives, with $
 L=L_\eta$ and  $K=K_\tau$,
\[
 \begin{gathered}
  \left.
  \begin{aligned}
   \bigl|K\cap\xi^\perp\bigr|_{n-1}
    &<\bigl|L\cap\xi^\perp\bigr|_{n-1},\\
   \bigl|K\mid\xi^\perp\bigr|_{n-1}
    &<\bigl|L\mid\xi^\perp\bigr|_{n-1}
  \end{aligned}
  \right\}
  \qquad\text{for all }\xi\in\Sph^{n-1},\\
  |K|>|L|.
 \end{gathered}
\]
This establishes the counterexample in
Theorem~\ref{thm:simultaneous} for every $n\geq5$.

\begin{remark}
A five-dimensional counterexample cannot be lifted to higher dimensions by
taking products with a common interval.  Indeed, projection of
$K\times[-h,h]$ onto the hyperplane orthogonal to the new coordinate axis
is naturally identified with $K$; hence $|K|>|L|$ would itself violate the
required projection inequality in the lifted dimension.  Our construction
instead rebuilds the meridian in each dimension and controls mixed
directions through the $n$-dimensional kernels
\eqref{eq:S-kernel} and \eqref{eq:Q-kernel}.
\end{remark}

\section{Counterexamples without origin symmetry}\label{sec:nonsymmetric}

Non-spherical convex bodies of constant brightness are classical; see
\cite{FireyConstantBrightness}.  Here we construct explicitly a
one-parameter family $L_\varepsilon$ of nonsymmetric bodies of revolution
near $B_2^n$, tailored to the present comparison problem.  Every projection
of $L_\varepsilon$ has the same $(n-1)$-dimensional volume as the
corresponding projection of $B_2^n$; moreover, for every sufficiently small
$\varepsilon\ne0$, the body $L_\varepsilon$ has smaller volume, and its
central sections satisfy the uniform second-order estimate needed below.
For bodies of revolution, the curvature equation reduces to an ordinary
differential equation that can be solved explicitly; compare the related
one-dimensional reconstruction in
\cite{RyaboginZvavitchReconstruction}.

Let $L$ be a convex body of class $C^2_+$, that is, with $C^2$ boundary
and positive Gaussian curvature, and let
$
 h_L(u)=\max_{x\in L}\langle x,u\rangle,$ $ u\in\Sph^{n-1},
$
be its support function.  Let $\dd u$ denote spherical surface measure on
$\Sph^{n-1}$.  If $I_{u^\perp}$ denotes the identity on tangent space 
$T_u\Sph^{n-1}=u^\perp$, then the density of the surface area measure 
$S_L$, with respect to spherical surface measure $\dd u$ is
\begin{equation}\label{eq:nonsym-curvature-function}
 f_L(u)=
 \det\bigl(\nabla_{\Sph^{n-1}}^2h_L(u)
              +h_L(u)I_{u^\perp}\bigr).
\end{equation}
This density is the curvature function of $L$, and the eigenvalues of the
matrix in \eqref{eq:nonsym-curvature-function} are its principal radii of
curvature; see
\cite[Corollaries~2.5.2 and~2.5.3 and pp.~115--120]{SchneiderBook}.
Together with Cauchy's projection formula  \eqref{eq:Cauchy-projection-boundary} or 
\cite[Formula~(A.45), p.~408]{GardnerBook}, this gives
\begin{equation}\label{eq:nonsym-Cauchy}
 \bigl|L\mid\xi^\perp\bigr|_{n-1}
 =\frac12\int_{\Sph^{n-1}}
       |\langle u,\xi\rangle|f_L(u)\,\dd u.
\end{equation}
Consequently, an odd degree perturbation 
does not change any projection volume.
Moreover, every curvature function satisfies the equilibrium identity
$
\int_{\Sph^{n-1}}u f_L(u),\dd u=0 
$
and therefore has no spherical-harmonic component of degree one. On the
support-function side, adding a harmonic of degree one merely translates
the body. Thus degree three is the first nontrivial odd choice. We will perturb  $f_{B_2^n}(u)=1$ with a spherical harmonic of degree $3$. More particular we use perturbation  
\begin{equation}\label{eq:nonsym-curvature}
 1+\varepsilon\left[6 u_1-2(n+2) u_1^3\right].
\end{equation}
Note that 
\[
 6u_1-2(n+2)u_1^3
 =-2(n+2)\left(u_1^3-\frac{3u_1}{n+2}\right),
\]
and the expression in parentheses is the restriction to the sphere of the
homogeneous harmonic polynomial
$
 x_1^3-\frac{3}{n+2}x_1\norm{x}_2^2;
$
see \cite[Chapter~3]{Groemer}.  The coefficients in
\eqref{eq:nonsym-curvature} are also chosen so that the rotational
curvature equation below has a particularly simple solution.

\begin{lemma}\label{lem:nonsym-perturbation}
For all sufficiently small $|\varepsilon|$, there is a $C^\infty$ strictly
convex body of revolution $L_\varepsilon$, containing the origin in its
interior and having everywhere positive Gaussian curvature, whose curvature
function is
\[
 f_{L_\varepsilon}(u)
 =1+6\varepsilon u_1-2(n+2)\varepsilon u_1^3.
\]
Its support function depends smoothly on $\varepsilon$, with
$L_0=B_2^n$, and
\begin{equation}\label{eq:nonsym-parity}
 L_{-\varepsilon}=-L_\varepsilon.
\end{equation}
Moreover,
\begin{equation}\label{eq:nonsym-projection}
 \bigl|L_\varepsilon\mid\xi^\perp\bigr|_{n-1}
 =|B_2^{n-1}|
 \qquad\text{for every }\xi\in\Sph^{n-1},
\end{equation}
whereas, for $\varepsilon\ne0$,
\begin{equation}\label{eq:nonsym-volume-gap}
 |L_\varepsilon|<|B_2^n|.
\end{equation}
\end{lemma}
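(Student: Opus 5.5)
The plan is to solve the rotational curvature equation explicitly, as announced in the text before the lemma, and then read off the remaining properties. Since the prescribed density depends only on $u_1=\cos\theta$, I look for a support function $h_\varepsilon(u)=\varphi(u_1)$. For a zonal function, the matrix $\nabla^2_{\Sph^{n-1}}h+hI$ has one meridional eigenvalue and $n-2$ equal latitudinal eigenvalues. Writing $s=u_1$, these are
\[
 \rho_1=(1-s^2)\varphi''(s)-s\varphi'(s)+\varphi(s),\qquad
 \rho_2=\varphi(s)-s\varphi'(s).
\]
The equation to solve is $\rho_1\rho_2^{\,n-2}=1+6\varepsilon s-2(n+2)\varepsilon s^3$.

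The key step is to guess a closed-form $\varphi$. I would first try to make $\rho_2$ simple, since $\rho_2=\varphi-s\varphi'$ is first order. Indeed $\varphi-s\varphi'=-s^2(\varphi/s)'$, so $\varphi$ can be recovered from $\rho_2$ by a single integration. A natural ansatz is $\rho_2=(1+c\varepsilon\,\psi(s))$ with $\psi$ a low-degree odd polynomial, or a power such as $\rho_2=(1+\alpha\varepsilon s^3+\beta\varepsilon s)^{1/(n-1)}$. The coefficients $6$ and $-2(n+2)$ in \eqref{eq:nonsym-curvature} are stated to be chosen so that the solution is particularly simple. I therefore expect an exact algebraic solution, possibly of the form $\varphi(s)=\sqrt{1+\dots}$ or a polynomial in $s$ plus a term in $(1-s^2)^{3/2}$, that satisfies the equation identically. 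I would match it by expanding to first order in $\varepsilon$ and then verifying exactness.

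The first-order linearization serves as a check. It reads $\Delta_{\Sph}h_1+(n-1)h_1=6s-2(n+2)s^3$. On degree-3 harmonics, $\Delta+(n-1)$ acts by $-(3(n+1)-(n-1))=-(2n+4)$. Hence
\[
 h_1=s^3-\frac{3s}{n+2}
\]
plus possibly a degree-1 term, which only translates the body. This also confirms the degree-3 structure of the answer. Finding an exact nonlinear solution is the main obstacle; if no closed form appears, I would instead invoke the Minkowski existence theorem. The density is positive for small $|\varepsilon|$ and has zero degree-1 moment, since it is a degree-3 harmonic plus a constant. Minkowski existence together with regularity (Cheng–Yau/Pogorelov) then gives a smooth body, unique up to translation. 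I would fix the translation so that the body is rotation-invariant and depends smoothly on $\varepsilon$, for instance via the implicit function theorem on the orthogonal complement of the degree-1 harmonics, using invertibility of the linearization there. Positivity of the radii of curvature for small $\varepsilon$ follows from $C^2$-closeness to the ball. The origin lies in the interior for small $\varepsilon$ by $C^0$-closeness to $B_2^n$.

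The remaining claims are then direct. For parity, $u\mapsto -u$ maps the density for $\varepsilon$ to that for $-\varepsilon$, so $-L_\varepsilon$ has curvature function $f_{L_{-\varepsilon}}$. Uniqueness, together with the normalization (odd translation fixed), gives \eqref{eq:nonsym-parity}. The projection identity \eqref{eq:nonsym-projection} follows from \eqref{eq:nonsym-Cauchy}, since $|\langle u,\xi\rangle|$ is even and the perturbation is odd. For volume, I use Minkowski's inequality $V_1(L_\varepsilon,B)^n\ge |L_\varepsilon|^{n-1}|B|$ with $B=B_2^n$. Here $nV_1(L_\varepsilon,B)$ equals $\int h_B\,dS_{L_\varepsilon}=\int f_{L_\varepsilon}\,du=|\Sph^{n-1}|=n|B|$, because the odd part integrates to zero. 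So $V_1=|B|$, which gives $|L_\varepsilon|\le|B|$. Equality in Minkowski's inequality would force $L_\varepsilon$ to be homothetic to $B$, hence to have constant curvature function, which fails for $\varepsilon\neq0$. This gives \eqref{eq:nonsym-volume-gap}.
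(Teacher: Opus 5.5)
Your proof is correct, but what actually carries it is your fallback, and that is a genuinely different route from the paper's. The paper completes the explicit solution you were looking for, and your ansatz $\rho_2=(1+\alpha\varepsilon s^3+\beta\varepsilon s)^{1/(n-1)}$ was the right one.

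The missing observation is that $\rho_2'=-s\varphi''$. Hence $\rho_1=\rho_2-\frac{1-s^2}{s}\rho_2'$, and with $w=\rho_2^{n-1}$ the equation becomes the \emph{linear} first-order equation $w-\frac{1-s^2}{(n-1)s}w'=1+6\varepsilon s-2(n+2)\varepsilon s^3$. Since $\rho_2'(0)=0$, one needs $\beta=0$, and then $w=1-2(n-1)\varepsilon s^3$ solves it identically. The support function is then obtained by quadrature: integrate $\varphi''=6\varepsilon s\,w^{-(n-2)/(n-1)}$ twice with $\varphi(0)=1$ and $\varphi'(0)=0$.

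The explicit route is elementary and self-contained. Joint smoothness in $(\varepsilon,u)$, positivity of both radii, and $L_{-\varepsilon}=-L_\varepsilon$ are all read off from the formulas. The translation is fixed by $\varphi'(0)=0$ rather than by removing the degree-one component.

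Your Minkowski/implicit-function route applies to any small smooth perturbation of the curvature function with vanishing first moment. Your parity argument is sound, because it combines uniqueness with a normalization that is invariant under $u\mapsto-u$. The cost is heavier machinery, and two points should be stated explicitly.
\begin{itemize}
\item The map $h\mapsto\det(\nabla^2_{\Sph^{n-1}}h+hI_{u^\perp})$ takes values orthogonal to the degree-one harmonics, by closedness of surface area measures. This is what makes the implicit function theorem between these complements legitimate.
\item Elliptic regularity must give smoothness jointly in $(\varepsilon,u)$ on a fixed $\varepsilon$-interval, which Lemma~\ref{lem:nonsym-section-factorization} later uses.
\end{itemize}

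Your projection and volume steps coincide with the paper's. Minkowski's first inequality against $B_2^n$ is the isoperimetric inequality, and both arguments use that the odd part of $f_{L_\varepsilon}$ integrates to zero.
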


\begin{proof}
We first reduce the curvature equation to one variable.  For
$u\in\Sph^{n-1}$, put $\alpha(u)=u_1$ and seek a rotationally invariant
support function of the form
$h_{L_\varepsilon}(u)=H_\varepsilon(\alpha)$.  The spherical gradient of
$\alpha$ is the tangential projection of the Euclidean gradient $e_1$, and
hence
$
 \nabla_{\Sph^{n-1}}\alpha=e_1-\alpha u.
$
Differentiating this identity in tangent directions gives
$
 \nabla_{\Sph^{n-1}}^2\alpha=-\alpha I_{u^\perp}.
$ 
The chain rule therefore yields
\begin{equation}\label{eq:nonsym-zonal-Hessian}
 \nabla_{\Sph^{n-1}}^2h_{L_\varepsilon}
 +h_{L_\varepsilon}I_{u^\perp}
 =H_\varepsilon''(\alpha)
   \nabla_{\Sph^{n-1}}\alpha\otimes
   \nabla_{\Sph^{n-1}}\alpha
  +\bigl(H_\varepsilon(\alpha)-\alpha H_\varepsilon'(\alpha)\bigr)
   I_{u^\perp}.
\end{equation}
For $|\alpha|<1$, we have
$|\nabla_{\Sph^{n-1}}\alpha|^2=1-\alpha^2$.  Thus the eigenvalues in
\eqref{eq:nonsym-zonal-Hessian} are
\begin{equation}\label{eq:nonsym-principal-radii}
 \begin{aligned}
 H_\varepsilon(\alpha)-\alpha H_\varepsilon'(\alpha)\quad
 &\text{with multiplicity }n-2,\\
 H_\varepsilon(\alpha)-\alpha H_\varepsilon'(\alpha)
 +(1-\alpha^2)H_\varepsilon''(\alpha)\quad
 &\text{with multiplicity }1.
 \end{aligned}
\end{equation}
At $\alpha=\pm1$, the spherical gradient of $\alpha$ vanishes, so all $n-1$
eigenvalues equal
$H_\varepsilon(\alpha)-\alpha H_\varepsilon'(\alpha)$; this also follows by
continuity from \eqref{eq:nonsym-principal-radii}.

We now explain how the support function is found.  Our goal is to solve
\begin{equation}\label{eq:nonsym-H-equation}
 \bigl(H_\varepsilon(\alpha)-\alpha H_\varepsilon'(\alpha)\bigr)^{n-2}
 \bigl(H_\varepsilon(\alpha)-\alpha H_\varepsilon'(\alpha)
 +(1-\alpha^2)H_\varepsilon''(\alpha)\bigr)
 =1+6\varepsilon\alpha-2(n+2)\varepsilon\alpha^3.
\end{equation}
Set
$
 q_\varepsilon(\alpha)
 =H_\varepsilon(\alpha)-\alpha H_\varepsilon'(\alpha).
$
Then $q_\varepsilon'(\alpha)=-\alpha H_\varepsilon''(\alpha)$.  If
$w=q_\varepsilon^{n-1}$, the left-hand side of
\eqref{eq:nonsym-H-equation} becomes, for $\alpha\ne0$,
\begin{equation}\label{eq:nonsym-q-equation}
 w(\alpha)-\frac{1-\alpha^2}{(n-1)\alpha}w'(\alpha).
\end{equation}
The expression at $\alpha=0$ is understood by continuity.  After
multiplication by $(n-1)\alpha$, the equation is regular at
$\alpha=0$ and is a first-order linear equation for $w$.  A solution that is smooth on the whole interval $[-1,1]$ is
\begin{equation}\label{eq:nonsym-q}
 q_\varepsilon(\alpha)^{n-1}
 =1-2(n-1)\varepsilon\alpha^3.
\end{equation}
Indeed, substitution into \eqref{eq:nonsym-q-equation} gives
\[
 1-2(n-1)\varepsilon\alpha^3
 +6\varepsilon\alpha(1-\alpha^2)
 =1+6\varepsilon\alpha-2(n+2)\varepsilon\alpha^3.
\]
For completeness,  note that the difference of two solutions of the linear equation
is
$
 C(1-\alpha^2)^{-(n-1)/2},
$
which cannot be smooth at the endpoints unless $C=0$.  Thus smoothness on
$[-1,1]$ supplies the boundary condition that makes
\eqref{eq:nonsym-q} unique.

For sufficiently small $|\varepsilon|$, the right-hand side of
\eqref{eq:nonsym-q} is positive, so we take
\[
 q_\varepsilon(\alpha)
 =\bigl(1-2(n-1)\varepsilon\alpha^3\bigr)^{1/(n-1)}.
\]
Differentiating gives
\[
 q_\varepsilon'(\alpha)
 =-6\varepsilon\alpha^2q_\varepsilon(\alpha)^{-(n-2)}.
\]
Since $q_\varepsilon'=-\alpha H_\varepsilon''$, the required second
derivative is
$
 H_\varepsilon''(\alpha)
 =6\varepsilon\alpha q_\varepsilon(\alpha)^{-(n-2)},
$
where the formula is smooth also at $\alpha=0$.  The conditions
$H_\varepsilon(0)=1$ and $H_\varepsilon'(0)=0$ give
\begin{equation}\label{eq:nonsym-H}
 H_\varepsilon(\alpha)
 =1+6\varepsilon\int_0^\alpha
       (\alpha-s)s\,q_\varepsilon(s)^{-(n-2)}\,\dd s.
\end{equation}
Here $H_\varepsilon(0)=q_\varepsilon(0)=1$ is required by the identity
$q_\varepsilon=H_\varepsilon-\alpha H_\varepsilon'$, while
$H_\varepsilon'(0)=0$ fixes the remaining freedom to add a term
$c\alpha$.  Such a term is the support function of a translation by
$ce_1$ and does not affect the curvature function.  Finally,
\[
 \frac{\dd}{\dd\alpha}
 \bigl(H_\varepsilon-\alpha H_\varepsilon'\bigr)
 =-\alpha H_\varepsilon''=q_\varepsilon',
\]
and both $H_\varepsilon-\alpha H_\varepsilon'$ and $q_\varepsilon$ equal
$1$ at $\alpha=0$.  Hence
\begin{equation}\label{eq:nonsym-q-identity}
 H_\varepsilon(\alpha)-\alpha H_\varepsilon'(\alpha)
 =q_\varepsilon(\alpha),
\end{equation}
as required.

Equations \eqref{eq:nonsym-principal-radii}--
\eqref{eq:nonsym-q-identity} give
\begin{align*}
 \det\bigl(\nabla_{\Sph^{n-1}}^2h_{L_\varepsilon}
              +h_{L_\varepsilon} I_{u^\perp}\bigr)
 &=q_\varepsilon(\alpha)^{n-2}
   \bigl(q_\varepsilon(\alpha)
         +(1-\alpha^2)H_\varepsilon''(\alpha)\bigr)\\
 &=1-2(n-1)\varepsilon\alpha^3
   +6\varepsilon\alpha(1-\alpha^2)\\
 &=1+6\varepsilon\alpha-2(n+2)\varepsilon\alpha^3.
\end{align*}
For small $|\varepsilon|$, both $q_\varepsilon$ and the curvature density
in \eqref{eq:nonsym-curvature} are positive.  If $n\geq3$, the first
principal radius is $q_\varepsilon>0$, and the remaining one is
\[
 \frac{1+6\varepsilon\alpha-2(n+2)\varepsilon\alpha^3}
      {q_\varepsilon(\alpha)^{n-2}}>0.
\]
When $n=2$, this is the only principal radius.  Thus
$\nabla_{\Sph^{n-1}}^2h_{L_\varepsilon}
 +h_{L_\varepsilon}I_{u^\perp}$ is positive definite everywhere.

The explicit formulas show that $h_{L_\varepsilon}$ depends smoothly on
$\varepsilon$, converges to $1$ in every $C^k$ norm, and is therefore
positive for small $|\varepsilon|$.  Its $1$-homogeneous extension
$\widetilde h_{L_\varepsilon}(x)
=|x|h_{L_\varepsilon}(x/|x|)$, $x\ne0$, extended by
$\widetilde h_{L_\varepsilon}(0)=0$, has positive semidefinite Euclidean
Hessian and is convex.  Hence $h_{L_\varepsilon}$ is a support function; see
\cite[Sections~1.7 and~2.5]{SchneiderBook}.  The same criterion shows that
the resulting body $L_\varepsilon$ is $C^\infty$, strictly convex, and has
positive Gaussian curvature.  Positivity of the support function means
that the origin lies in the interior.  Since $h_{L_\varepsilon}(u)$ depends
only on $u_1$, the body is rotationally invariant about the $e_1$-axis.

The formulas also give
$
 q_{-\varepsilon}(\alpha)=q_\varepsilon(-\alpha)$, and 
 $H_{-\varepsilon}(\alpha)=H_\varepsilon(-\alpha)$. 
Consequently $h_{L_{-\varepsilon}}(u)=h_{L_\varepsilon}(-u)$, which is exactly
\eqref{eq:nonsym-parity}.

The nonconstant part of the curvature function is odd.  Therefore
\eqref{eq:nonsym-Cauchy} gives \eqref{eq:nonsym-projection}.  The same oddness shows that $L_\varepsilon$ and $B_2^n$  have the same surface area. For
$\varepsilon\ne0$, the curvature function of $L_\varepsilon$ is
nonconstant, so $L_\varepsilon$ is not a Euclidean ball. The
isoperimetric inequality and its equality condition
\cite[p.~382]{SchneiderBook} now imply
\eqref{eq:nonsym-volume-gap}.
\end{proof}

The following estimate is the only further information about the central
sections of $L_\varepsilon$ that is needed.  Its smooth-factorization argument
is analogous to the one used in
\cite[Lemma~3.3]{NazarovRyaboginZvavitchLocal}. 
Recall that the radial function of a convex body $K$ containing the
origin in its interior is
\[
 \rho_K(u)=\max\{\lambda\geq0:\lambda u\in K\},
 \qquad u\in\Sph^{n-1}.
\]

\begin{lemma}\label{lem:nonsym-section-factorization}
There are constants $\varepsilon_0>0$ and $M>0$ such that
\begin{equation}\label{eq:nonsym-section-factorization}
 \left|
 \frac{|L_\varepsilon\cap\xi^\perp|_{n-1}}{|B_2^{n-1}|}-1
 \right|
 \leq M\varepsilon^2\xi_1^2
\end{equation}
whenever $|\varepsilon|\leq\varepsilon_0$ and $\xi\in\Sph^{n-1}$.
\end{lemma}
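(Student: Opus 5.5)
Write $F(\varepsilon,\xi)=|L_\varepsilon\cap\xi^\perp|_{n-1}/|B_2^{n-1}|-1$. The plan is to show that $F$ is smooth in $(\varepsilon,\xi)$, even in $\varepsilon$, and vanishes both at $\varepsilon=0$ and on the equatorial set $\xi_1=0$. After establishing these properties, I will obtain the factor $\varepsilon^2\xi_1^2$ by two Hadamard-type factorizations.

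\emph{Smoothness.} The section volume is given by the radial formula
\[
 |L_\varepsilon\cap\xi^\perp|_{n-1}=\frac1{n-1}\int_{\Sph^{n-1}\cap\xi^\perp}\rho_{L_\varepsilon}(v)^{n-1}\dd v .
\]
Since $h_{L_\varepsilon}$ depends smoothly on $\varepsilon$ and $L_\varepsilon$ has positive curvature, Lemma~\ref{lem:nonsym-perturbation} gives a boundary of $L_\varepsilon$ that depends smoothly on $\varepsilon$. The boundary point with normal $u$ is $\nabla\widetilde h_{L_\varepsilon}(u)$. Because the origin is interior, uniformly for small $|\varepsilon|$, the implicit function theorem then makes $\rho_{L_\varepsilon}(v)$ smooth in $(\varepsilon,v)$. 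Parametrize $\xi^\perp\cap\Sph^{n-1}$ smoothly and locally in $\xi$ by a rotation $R_\xi$ applied to a fixed $\Sph^{n-2}$. This shows $F$ is $C^\infty$ on $[-\varepsilon_1,\varepsilon_1]\times\Sph^{n-1}$.

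\emph{Evenness in $\varepsilon$.} By \eqref{eq:nonsym-parity} we have $L_{-\varepsilon}\cap\xi^\perp=-(L_\varepsilon\cap\xi^\perp)$, which has the same volume. Hence $F(-\varepsilon,\xi)=F(\varepsilon,\xi)$. Since also $F(0,\xi)=0$, we get $\partial_\varepsilon F(0,\xi)=0$. Taylor's formula with integral remainder then gives $F(\varepsilon,\xi)=\varepsilon^2G(\varepsilon,\xi)$, where
\[
 G(\varepsilon,\xi)=\int_0^1(1-\tau)\,\partial_\varepsilon^2F(\tau\varepsilon,\xi)\dd\tau
\]
is smooth in $(\varepsilon,\xi)$.

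\emph{Vanishing on the equator.} When $\xi_1=0$, the hyperplane $\xi^\perp$ contains the axis $e_1$. The body $L_\varepsilon$ is invariant under the reflection $z\mapsto z-2\langle z,\xi\rangle\xi$, which fixes $e_1$. Invariance alone gives no section information, so I must instead argue that the section $L_\varepsilon\cap\xi^\perp$ equals the projection $L_\varepsilon\mid\xi^\perp$. Indeed, for any $y\in\xi^\perp$ and $z=y+c\xi\in L_\varepsilon$, the reflected point $y-c\xi$ also lies in $L_\varepsilon$. By convexity their midpoint $y$ lies in $L_\varepsilon$. Hence $L_\varepsilon\mid\xi^\perp\subset L_\varepsilon\cap\xi^\perp$, and the reverse inclusion is trivial. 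By \eqref{eq:nonsym-projection} the section volume is therefore $|B_2^{n-1}|$, so $F(\varepsilon,\xi)=0$ whenever $\xi_1=0$, and thus $G(\varepsilon,\xi)=0$ there for $\varepsilon\neq0$. By continuity in $\varepsilon$ this also holds at $\varepsilon=0$.

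\emph{Factoring out $\xi_1^2$.} Rotational symmetry gives $G(\varepsilon,\xi)=g(\varepsilon,\xi_1)$, where $g(\varepsilon,\theta)$ is smooth in $\theta\in[-1,1]$. To see this, evaluate at the point $(\theta,\sqrt{1-\theta^2},0,\dots)$; smoothness at $\theta=\pm1$ is not needed, since there $\xi_1^2=1$ is bounded below and boundedness suffices. Central symmetry of the data, namely that $\xi$ and $-\xi$ give the same hyperplane, makes $g$ even in $\theta$. Together with $g(\varepsilon,0)=0$ this yields $\partial_\theta g(\varepsilon,0)=0$. Taylor expansion in $\theta$ on $|\theta|\le1/2$ gives
\[
 |g(\varepsilon,\theta)|\le\tfrac12\sup|\partial_\theta^2g|\,\theta^2 ,
\]
while on $|\theta|\ge1/2$ the bound $|g|\le\sup|G|\le 4\sup|G|\,\theta^2$ holds. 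Compactness supplies the constant $M$.

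\emph{Main obstacle.} The delicate step is justifying joint smoothness of $\rho_{L_\varepsilon}$, and hence of $F$, uniformly in $\varepsilon$, with enough derivatives near $\theta=0$. I would handle it via the implicit function theorem applied to the gauge function $\|x\|_{L_\varepsilon}$. This gauge is smooth away from $0$ because $L_\varepsilon$ is $C^\infty$ with positive curvature and depends smoothly on $\varepsilon$.
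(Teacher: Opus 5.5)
Your proposal is correct and follows essentially the same route as the paper: joint smoothness of $\rho_{L_\varepsilon}$ in $(\varepsilon,v)$ (the paper gets this from the parameterized inverse function theorem applied to $X_\varepsilon/\|X_\varepsilon\|_2$), evenness in $\varepsilon$ and in $\xi_1$, vanishing at $\varepsilon=0$ and on $\xi_1=0$ via the section--projection identity, a Taylor factorization of $\varepsilon^2\xi_1^2$ on $|\xi_1|\le\frac12$, and a compactness bound for $|\xi_1|\ge\frac12$. The differences are only minor variants: you factor sequentially, you parametrize $\xi^\perp\cap\Sph^{n-1}$ directly rather than citing continuity of the spherical Radon transform, and you prove that the section equals the projection by a reflection--midpoint argument rather than slice by slice.
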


\begin{proof}
For $u\in\Sph^{n-1}$, the point of $\partial L_\varepsilon$ with outer
unit normal $u$ is
\begin{equation}\label{eq:nonsym-boundary-map}
 X_\varepsilon(u)=h_{L_\varepsilon}(u)u
                 +\nabla_{\Sph^{n-1}}h_{L_\varepsilon}(u).
\end{equation}
This is the inverse spherical-image parametrization; see
\cite[Section~0.9, especially p.~24]{GardnerBook} and
\cite[Corollary~1.7.3]{SchneiderBook}.  Differentiating
\eqref{eq:nonsym-boundary-map} on $u^\perp$ gives
\[
 D_uX_\varepsilon
 =\nabla_{\Sph^{n-1}}^2h_{L_\varepsilon}(u)
  +h_{L_\varepsilon}(u)I_{u^\perp},
\]
which is positive definite by Lemma~\ref{lem:nonsym-perturbation}.
Thus $X_\varepsilon$ is a smooth nondegenerate parametrization of the
boundary, jointly smooth in $(\varepsilon,u)$.

It remains to show that the radial function depends smoothly on
$\varepsilon$.  Set $
 p_\varepsilon(u)
 =
 X_\varepsilon(u)/\|X_\varepsilon(u)\|_2.
$ 
Because $L_\varepsilon$ is smooth and strictly convex and contains the
origin in its interior, every ray from the origin meets
$\partial L_\varepsilon$ in exactly one point, and that boundary point
has a unique outer unit normal.  Hence $p_\varepsilon$ is bijective.  Moreover,
$\langle X_\varepsilon(u),u\rangle=h_{L_\varepsilon}(u)>0$.  Hence
$X_\varepsilon(u)$ is not tangent to the boundary.  The kernel of the
differential of $x\mapsto x/|x|$ at $X_\varepsilon(u)$ is
$\R X_\varepsilon(u)$, whereas $D_uX_\varepsilon$ maps onto $u^\perp$.
These subspaces meet only at the origin, so the differential of
$p_\varepsilon$ is nonsingular.  The parameterized inverse function theorem
applied to
$(\varepsilon,u)\mapsto(\varepsilon,p_\varepsilon(u))$ therefore shows that
$p_\varepsilon^{-1}(v)$ is smooth in $(\varepsilon,v)$.  Consequently,
\begin{equation}\label{eq:nonsym-radial-from-support}
 \rho_{L_\varepsilon}(v)
 =\bigl|X_\varepsilon\bigl(p_\varepsilon^{-1}(v)\bigr)\bigr|
\end{equation}
is smooth in $(\varepsilon,v)$.

Polar coordinates inside $\xi^\perp$ give
\begin{equation}\label{eq:nonsym-section-Radon}
 \mathcal A(\varepsilon,\xi)
 :=\frac{|L_\varepsilon\cap\xi^\perp|_{n-1}}{|B_2^{n-1}|}
 =\frac{1}{(n-1)|B_2^{n-1}|}
  \int_{\Sph^{n-1}\cap\xi^\perp}
       \rho_{L_\varepsilon}(v)^{n-1}\,\dd v.
\end{equation}
For $n\geq3$, the integral is the spherical Radon transform.  It is
unchanged when $\rho_{L_\varepsilon}^{n-1}$ is replaced by its even part,
because the equator is invariant under $v\mapsto-v$.  The spherical Radon
transform is continuous on the space of even $C^\infty$ functions
\cite[Theorem~C.2.5]{GardnerBook}.  Together with
\eqref{eq:nonsym-radial-from-support}, this shows, also after
differentiation in $\varepsilon$, that $\mathcal A$ is smooth in
$(\varepsilon,\xi)$.  When $n=2$, the integral is the sum over the two
points of $\Sph^0$, and the same conclusion follows directly.

Rotational invariance shows that $\mathcal A(\varepsilon,\xi)$ depends only
on $\xi_1$; denote the resulting profile by $A(\varepsilon,\xi_1)$.
Because $\xi^\perp=(-\xi)^\perp$ and
$L_{-\varepsilon}=-L_\varepsilon$, we have
\begin{equation}\label{eq:nonsym-A-parity}
 A(\varepsilon,-\xi_1)=A(\varepsilon,\xi_1),
 \qquad
 A(-\varepsilon,\xi_1)=A(\varepsilon,\xi_1).
\end{equation}
Also $A(0,\xi_1)=1$.  If $\xi_1=0$, then $\xi^\perp$ contains the
axis of revolution.  On every slice perpendicular to that axis,
intersection with $\xi^\perp$ and orthogonal projection onto
$\xi^\perp$ produce the same centered ball.  Integrating the slices and
using \eqref{eq:nonsym-projection} gives
\begin{equation}\label{eq:nonsym-A-axis}
 A(\varepsilon,0)=1.
\end{equation}
For $n=2$, the same assertion says that intersection with and projection
onto the axis give the same interval.

Set $F=A-1$.  On $|\xi_1|\leq\frac12$, the profile is smooth in
$(\varepsilon,\xi_1)$; for example, evaluate $\mathcal A$ at the smoothly
varying normal $
 \xi=(\xi_1,\sqrt{1-\xi_1^2},0,\ldots,0)$.
The parity relations \eqref{eq:nonsym-A-parity}, together with
$F(0,\xi_1)=F(\varepsilon,0)=0$, give, by Taylor's formula with integral
remainder,
\[
 F(\varepsilon,\xi_1)
 =\varepsilon^2\xi_1^2\int_0^1\!\int_0^1
 (1-r)(1-s)
 \partial_\varepsilon^2\partial_{\xi_1}^2
 F(r\varepsilon,s\xi_1)\,\dd s\,\dd r.
\]
The mixed derivative is uniformly bounded on a sufficiently small compact
parameter set, so
\begin{equation}\label{eq:nonsym-local-factor}
 |F(\varepsilon,\xi_1)|\leq C_1\varepsilon^2\xi_1^2
 \qquad\text{when }|\xi_1|\leq\frac12.
\end{equation}
On the other hand, the smoothness of $\mathcal A$ on the compact set
$[-\varepsilon_0,\varepsilon_0]\times\Sph^{n-1}$, its evenness in
$\varepsilon$, and $\mathcal A(0,\xi)=1$ give the uniform estimate
\[
 |\mathcal A(\varepsilon,\xi)-1|\leq C_2\varepsilon^2.
\]
If $|\xi_1|\geq\frac12$, the last bound is at most
$4C_2\varepsilon^2\xi_1^2$.  Taking
$M=\max\{C_1,4C_2\}$ proves
\eqref{eq:nonsym-section-factorization}.  
\end{proof}

\begin{proof}[Proof of Theorem~\ref{thm:nonsymmetric}]
Let $M$ be as in Lemma~\ref{lem:nonsym-section-factorization}.  Since
\[
 \lim_{x\downarrow0}
 \frac{1-(1-x)^{(n-1)/2}}{x}=\frac{n-1}{2},
\]
there is $x_0\in(0,1)$ such that
\begin{equation}\label{eq:nonsym-elementary}
 (1-x)^{(n-1)/2}\leq1-\frac{n-1}{4}x\qquad \mbox{ for }
 \qquad 0\leq x\leq x_0.
\end{equation}
Choose $c>0$ so that
\begin{equation}\label{eq:nonsym-c-choice}
 \frac{n-1}{4}c^2>M.
\end{equation}
For $\varepsilon>0$, set
\begin{equation}\label{eq:nonsym-K}
 r_\varepsilon
 =\frac12\left(
  1+\left(\frac{|L_\varepsilon|}{|B_2^n|}\right)^{1/n}
 \right),
 \qquad
 K_\varepsilon=c\varepsilon e_1+r_\varepsilon B_2^n.
\end{equation}
By \eqref{eq:nonsym-volume-gap},
\begin{equation}\label{eq:nonsym-r-range}
 \left(\frac{|L_\varepsilon|}{|B_2^n|}\right)^{1/n}
 <r_\varepsilon<1.
\end{equation}
Also $r_\varepsilon\to1$ as $\varepsilon\to0$.  Fix $\varepsilon>0$ sufficiently small that Lemma~\ref{lem:nonsym-section-factorization}
applies,
\[
 c\varepsilon<r_\varepsilon,
 \qquad c^2\varepsilon^2\leq x_0.
\]
The first inequality ensures that $K_\varepsilon$ contains the origin in
its interior; it also ensures that all the ball sections used below are
nonempty.

Translations do not affect orthogonal projection volumes.  Therefore,
by \eqref{eq:nonsym-projection} and \eqref{eq:nonsym-r-range},
\[
 |K_\varepsilon\mid\xi^\perp|_{n-1}
 =r_\varepsilon^{n-1}|B_2^{n-1}|
 <|B_2^{n-1}|
 =|L_\varepsilon\mid\xi^\perp|_{n-1}
\]
for every $\xi\in\Sph^{n-1}$.  Similarly,
\[
 |K_\varepsilon|
 =r_\varepsilon^n|B_2^n|
 >|L_\varepsilon|.
\]

It remains to compare central sections.  The distance from the center
$c\varepsilon e_1$ of $K_\varepsilon$ to $\xi^\perp$ is
$c\varepsilon|\xi_1|$.  The section is consequently an $(n-1)$-dimensional
ball of radius
$\sqrt{r_\varepsilon^2-c^2\varepsilon^2\xi_1^2}$, and hence
\begin{equation}\label{eq:nonsym-K-section}
 \frac{|K_\varepsilon\cap\xi^\perp|_{n-1}}{|B_2^{n-1}|}
 =\bigl(r_\varepsilon^2-c^2\varepsilon^2\xi_1^2\bigr)^{(n-1)/2}.
\end{equation}
If $\xi_1\ne0$, then \eqref{eq:nonsym-elementary},
$r_\varepsilon<1$, and \eqref{eq:nonsym-c-choice} give
\begin{equation*}
 \frac{|K_\varepsilon\cap\xi^\perp|_{n-1}}{|B_2^{n-1}|}
 \leq
 \bigl(1-c^2\varepsilon^2\xi_1^2\bigr)^{(n-1)/2}\leq1-\frac{n-1}{4}c^2\varepsilon^2\xi_1^2<1-M\varepsilon^2\xi_1^2
 \leq
 \frac{|L_\varepsilon\cap\xi^\perp|_{n-1}}{|B_2^{n-1}|},
\end{equation*}
where the last inequality is precisely
\eqref{eq:nonsym-section-factorization}.  If $\xi_1=0$, the slice--projection
identity used in the proof of that lemma and
\eqref{eq:nonsym-projection} give
\[
 |L_\varepsilon\cap\xi^\perp|_{n-1}=|B_2^{n-1}|,
\]
whereas \eqref{eq:nonsym-K-section} gives
\[
 |K_\varepsilon\cap\xi^\perp|_{n-1}
 =r_\varepsilon^{n-1}|B_2^{n-1}|<|B_2^{n-1}|.
\]
Thus all section and projection inequalities are strict, while
$|K_\varepsilon|>|L_\varepsilon|$.

Both bodies are bodies of revolution and contain the origin in their
interiors.  The support function
\[
 h_{K_\varepsilon}(u)=r_\varepsilon+c\varepsilon u_1
\]
is not even, so $K_\varepsilon$ is not origin-symmetric.  The curvature
function of $L_\varepsilon$ is not even when $\varepsilon\ne0$.  Since surface area
measure is unchanged by translation and is even for every centrally
symmetric body, $L_\varepsilon$ is not centrally symmetric about any point.
This proves Theorem~\ref{thm:nonsymmetric} for every $n\geq2$.
\end{proof}

\section*{Acknowledgments}
The author is grateful to Dmitry Ryabogin for his ICERM lecture
\emph{"On bodies with planar symmetric projections"}, based on ongoing joint
work with Serhii Myroshnychenko and Christos Saroglou.  The idea of using a
spherical harmonic of degree three in the construction of
Section~\ref{sec:nonsymmetric} was inspired by this lecture.

The author used OpenAI's GPT-5.6 Sol to assist with checking
calculations and proof consistency, improving the exposition, and preparing
the manuscript.

\vspace{2mm}
\noindent Artem Zvavitch
\\
Department of Mathematical Sciences, Kent State University, Kent, OH 44242, USA. 
\\ E-mail address: zvavitch@math.kent.edu

\end{document}